\DeclareMathOperator{\GL}{GL}
\newcommand\calT{\mathcal{T}}
\newcommand\calM{\mathcal{M}}
\newcommand\calU{\mathcal{U}}
\newcommand\ANN{\operatorname{Ann}}
\newcommand{\Tgn}[1][g,n]{\calT_{#1}}
\newcommand\Tgnp{\Tgn[g',n']}
\newcommand{\QTgn}[1][g,n]{Q\calT_{#1}}
\newcommand\RR{\mathbb{R}}
\newcommand\CC{\mathbb{C}}
\newcommand{\PQ}[1][X]{\mathbb{P}Q({#1})^*}
\newcommand{\PQgn}[1][g,n]{\mathbb{P}Q\calT_{#1}^*}
\newcommand\Cgn[1][g,n]{\mathcal{C}_{#1}}
\newcommand\Mgn[1][g,n]{\mathcal{M}_{#1}}
\newcommand\Mod{\operatorname{Mod}}
\newcommand\norm[1]{||#1||}
\newcommand\QM{QM}
\newcommand\nb{\mathcal{N}}
\newcommand\TM{Teichm\"uller\xspace}
\newcommand\muq[1][q]{\norm{#1}\dfrac{\overline{#1}}{|#1|}}
\newcommand\muqu[1][q]{\dfrac{\overline{#1}}{|#1|}}
\newcommand\tmuqu[1][q]{\tfrac{\overline{#1}}{|#1|}}
\newcommand{\tops}{\Sigma}
\newcommand\tr{\operatorname{tr}}
\newtheorem{theorem}{Theorem}[section]
\newtheorem{corollary}[theorem]{Corollary}
\newtheorem{lemma}[theorem]{Lemma}
\newtheorem{proposition}[theorem]{Proposition}
\theoremstyle{definition}
\newtheorem{definition}[theorem]{Definition}
\theoremstyle{remark}
\newtheorem{remark}[theorem]{Remark}
\newcommand\re{\operatorname{Re}}
\newcommand\nm[2][V]{||#2||_{#1}}
\newcommand\id{\operatorname{id}}
\title{Isometric embeddings of  Teichm\"{u}ller spaces are covering constructions}
\author{Frederik Benirschke}
\author{Carlos A. Serv\'an}
\address{Department of Mathematics, University of Chicago}
\email{benirschke@uchicago.edu}
\email{cmarceloservan@uchicago.edu}
\begin{document}

\maketitle
\begin{abstract}
    Let $h: \Sigma' \to \Sigma$ be a branched covering of topological surfaces. By pulling back complex structures, $h$ induces a holomorphic isometric embedding of \TM spaces $\calT{(\Sigma)} \hookrightarrow \calT{(\Sigma')}$. We show that for $\dim\calT(\Sigma)\geq 2$, all isometric embeddings arise from branched coverings. This generalizes a theorem of Royden~\cite{royden-automorphisms}.
    As a consequence we obtain that totally geodesic submanifolds of $\calT(\Sigma)$, which are isometric to some \TM space, are covering constructions. Another consequence is 
    the classification of locally isometric embeddings
    of moduli spaces of Riemann surfaces.
\end{abstract}
\section{Introduction}
Let $\Tgn=\calT(\Sigma)$ be the \TM space of an $n$-marked topological surface $\Sigma$ of genus $g$, where we assume $3g-3+n>0$. A celebrated theorem of Royden \cite{royden-automorphisms}, combined with results by Earle-Kra \cite{earle-kra}, shows that for $2g+n \geq 5$ the mapping class group, the biholomorphism group of $\Tgn$ and the (orientation-preserving) isometry group for the \TM metric are isomorphic. 
In particular all isometries for the \TM metric are induced by a diffeomorphism of topological surfaces.
In this paper we study a generalization of Royden's theorem by relaxing isometries to isometric embeddings, namely, distance-preserving maps for the \TM metric. One source of isometric embeddings is from {\em covering constructions}, which can be described as follows. A point in $(X,\phi)\in\calT(\Sigma)$ is defined by a marked Riemann surface $\phi:\Sigma\to X$,  where $\phi$ is a homeomorphism from a fixed topological surface $\Sigma$ to a Riemann surface $X$. Suppose that \[h:\Sigma'\to \Sigma\] is a branched covering of topological surfaces. Then there exists a unique marked Riemann surface $\phi_h:\Sigma' \to X_h$ such that the composition 
\[
\phi\circ h\circ (\phi_h)^{-1}:X_h\to X
\] is holomorphic.
We call a map \[f:\calT(\Sigma) \to \calT(
\Sigma')\] a {\em covering construction} if there exists a branched covering $h:\Sigma'\to \Sigma$ such that for all $(X,\phi)\in\calT(\Sigma)$ we have \[f(X,\phi:\Sigma \to X) = (X_h,\phi_h:\Sigma'\to X_h).\] 
We will discuss covering constructions in more detail in \Cref{sec-prelim} and \Cref{appendix}.

Our main result is that, except for finitely many exceptions of $g$ and $n$, covering constructions are the only examples of isometric embeddings.

\begin{theorem}[\textbf{Isometric embeddings are geometric}]\label{theorem:geometric_isometries} Let $\Tgn$ and $\Tgn[g',n']$ be \TM spaces and
suppose $2g+n\geq 5$. Then any isometric embedding $f:\Tgn\to\Tgn[g',n']$ is a covering construction, up to pre- or post-composition with an orientation-reversing mapping class.
\end{theorem}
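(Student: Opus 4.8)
The plan is to reduce \Cref{theorem:geometric_isometries} to an infinitesimal, linear statement about the $L^1$-norm on spaces of integrable holomorphic quadratic differentials, in the spirit of Royden, and then to integrate the resulting pointwise data into a single branched covering. Recall that the \TM metric coincides with the Kobayashi metric and is a Finsler metric whose infinitesimal conorm at a point $X\in\calT(\Sigma)$ is the $L^1$-norm $\norm{q}=\int_X|q|$ on the cotangent space $Q(X)$ of integrable holomorphic quadratic differentials, the infinitesimal tangent norm being the dual norm on $T_X\calT(\Sigma)\cong Q(X)^*$.

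First I would establish regularity. An isometric embedding $f:\calT(\Sigma)\to\calT(\Sigma')$ is a priori only distance-preserving, so the initial step is to show $f$ is differentiable, so that at every $X$ the derivative $df_X:T_X\calT(\Sigma)\to T_{f(X)}\calT(\Sigma')$ is a linear isometric embedding for the infinitesimal norms. Since $f$ carries \TM geodesics to geodesics and the conorm is $C^1$ and strictly convex away from a small singular locus, I expect a Myers--Steenrod/Rademacher-type argument to yield differentiability on a dense set and then everywhere. Dualizing, the coderivative is a surjective real-linear map $\Lambda_X:Q(f(X))\to Q(X)$ which is a metric quotient for the $L^1$-norms, that is, it carries the closed unit ball onto the closed unit ball; for a covering construction coming from $h:\Sigma'\to\Sigma$ this coderivative is precisely the pushforward (trace) map along the induced holomorphic branched covering.

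The heart of the argument is a linear rigidity result classifying such $L^1$-metric quotients between spaces of integrable quadratic differentials: I claim each $\Lambda_X$ must be the pushforward $h_*$ along a holomorphic branched covering $h:Y\to X$, possibly after an anticonformal modification. Following Royden, the mechanism is that the failure of $\norm{\cdot}$ to be twice differentiable is concentrated near the zeros of $q$, and the precise singular expansion of the norm along rays through $q$ encodes the local conformal geometry of $X$ at those zeros. A metric quotient must preserve this singular structure, which forces it to be realized by a holomorphic map of the underlying surfaces; surjectivity of $\Lambda_X$ rather than bijectivity, as in Royden, is exactly what upgrades a conformal isomorphism to a branched covering. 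Here the hypotheses $2g+n\geq 5$ and $\dim\calT(\Sigma)\geq 2$ enter, excluding the low-complexity surfaces on which this local analysis degenerates and spurious isometries appear. Finally I would globalize: the coverings $h_X:Y_X\to X$ carry discrete data (degree and local branching type), which is locally constant in $X$ and hence constant on the connected space $\calT(\Sigma)$, yielding a single topological branched covering $h:\Sigma'\to\Sigma$. Letting $F$ be the associated covering construction, both $f$ and $F$ are isometric embeddings with the same derivative everywhere, so by rigidity of \TM isometries they agree up to a mapping class, the anticonformal case accounting for the orientation reversal.

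I expect the main obstacle to be the linear rigidity: extending Royden's analysis of \emph{isometries} of $(Q(X),\norm{\cdot})$ to surjective \emph{metric quotients} $Q(Y)\to Q(X)$, and reconstructing a branched covering rather than an isomorphism from the singular structure of the $L^1$-norm. The differentiability step is also delicate, since $f$ is only assumed distance-preserving.
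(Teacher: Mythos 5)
You have correctly identified the shape of the problem: after reducing to the derivative, the coderivative $f^*:Q(f(X))\to Q(X)$ is an isometric submersion (unit ball onto unit ball), not an isometric embedding, and the dual of a covering construction's derivative is the trace map. But the step you call the heart of the argument --- a linear rigidity theorem classifying $L^1$-metric quotients $Q(Y)\to Q(X)$ as trace maps $h_*$ --- is a genuine gap, not merely the hardest step of a working plan. No such classification is known, and Royden's and Markovic's mechanism does not transfer: their analysis of the second-order singular expansion of the $L^1$-norm detects the surface via evaluation-type functionals that behave well under \emph{pullback} (injective isometries), whereas under a quotient map one only controls norms of fibers, and the singular structure at a point of $X$ gets smeared over an entire fiber of the would-be covering. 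One also cannot dualize your $\Lambda_X$ to reduce to the known embedding theorem, since the dual of $(Q(X),\norm{\cdot}_{L^1})$ is a space of Beltrami classes with the \TM sup-type norm, not another $L^1$-space of quadratic differentials, so \Cref{theorem:isometry_to_covering} does not apply to $\Lambda_X^*$. The paper explicitly flags this obstruction and routes around it rather than through it.

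The paper's workaround, which your proposal is missing, is the geodesic umkehr map of \Cref{section-umkehr}: a norm-preserving right inverse $f_!:Q(X)\to Q(f(X))$ of $f^*$, defined geometrically by sending $q$ to the generator of the image geodesic $f(g_q)$. The genuinely hard point is then to prove $f_!$ is $\CC$-linear (\Cref{linearity_umkehr}); note that with the strictly convex area norm, $f_!(q)$ is just the unique minimal-norm preimage of $q$ under $f^*$, and such a canonical section of a metric quotient has no reason to be linear --- your proposal contains no mechanism to produce linearity. The paper gets it from \Cref{proper-lin-bundle}, that the geodesic bundle $QM$ of the totally geodesic image $M=f(\Tgn)$ is a holomorphic subbundle, which rests on substantial dynamical input (Wright's theorem on totally geodesic submanifolds and Filip's algebraicity theorem) entirely absent from your sketch; once $f_!$ is linear and isometric, the \emph{embedding} classification (\Cref{theorem:isometry_to_covering}, applied to $f_!$, not to $f^*$) yields $f_!=c\,h_X^*/\deg h_X$ fiberwise. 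Two secondary points: your Myers--Steenrod differentiability step would only give a real-linear coderivative, while \Cref{theorem:isometry_to_covering} needs $\CC$-linearity --- the paper instead invokes Antonakoudis's theorem that distance-preserving maps are holomorphic or antiholomorphic, which you should cite rather than re-derive; and in the globalization, the branching data is only upper semicontinuous, not locally constant (branch points can a priori collide), so the paper works on a dense open set where the ramification profile is constant and then extends by holomorphicity of the family $H$, a wrinkle your ``locally constant, hence constant'' argument elides.
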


In \cite{antonakoudis-isometric}, Antonakoudis showed that isometric embeddings are either holomorphic or antiholomorphic. Thus by possibly composing with an orientation-reversing mapping class, we can focus on holomorphic, isometric embeddings.
We also have a classification in the exceptional cases $2g+n\leq 4$, although the classification is more subtle.
 The only four possibilities for $g$ and $n$ are $(g,n)=(0,4), (1,1), (1,2), (2,0)$.
The first two are $1$-dimensional \TM spaces, the remaining two are of dimension $2$ and $3$, respectively.
\begin{corollary}[\textbf{Low complexity cases}]\label{cor:low-complexity}
Let $f:\Tgn\to\Tgn[g',n']$ be a holomorphic, isometric embedding of \TM spaces. Assume $2g+n\leq 4$.

 \begin{enumerate}
 \item
 If $\dim\Tgn=1$, then $f$ is a \TM disc.
 \item
If $(g,n)=(1,2)$, then $\iota:\Tgn[1,2]\simeq\Tgn[0,5]$ via the hyperelliptic involution and 
 \[
 f=h\circ \iota
 \]for some covering construction  $h:\Tgn[0,5]\to\Tgn[g',n']$.
 \item  If $(g,n)=(2,0)$, 
 then  $\iota:\Tgn[2,0]\simeq\Tgn[0,6]$ via the hyperelliptic involution and 
 \[
 f=h\circ \iota
 \]for some covering construction  $h:\Tgn[0,6]\to\Tgn[g',n']$.
 
 \end{enumerate}
 
 \end{corollary}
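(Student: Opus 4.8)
The plan is to reduce each of the three cases to facts already in hand, using the classical exceptional isomorphisms between low-dimensional \TM spaces. The one structural input I will use repeatedly is Royden's identification of the \TM metric with the Kobayashi metric: since the Kobayashi metric is a biholomorphic invariant, every biholomorphism of \TM spaces is automatically an isometry, and, conversely, the hypothesis that $f$ is a holomorphic isometric embedding is exactly what is needed to invoke \Cref{theorem:geometric_isometries} once the source satisfies $2g+n\geq 5$.

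For part (1) the relevant cases are $(g,n)=(0,4)$ and $(1,1)$, both of complex dimension one. In each, $\Tgn$ is biholomorphic to the unit disc $\mathbb{D}\subset\CC$, and under this identification the \TM metric is the hyperbolic metric. A holomorphic isometric embedding $\mathbb{D}\hookrightarrow\Tgn[g',n']$ is, by definition, a \TM disc, so once the identification $\Tgn\simeq\mathbb{D}$ is recorded there is nothing further to prove. For parts (2) and (3) the source has $2g+n=4<5$, so \Cref{theorem:geometric_isometries} does not apply directly, and the remedy is the hyperelliptic involution. A genus-$1$ surface with two marked points carries an involution interchanging the two marked points and fixing four further points; its quotient is a sphere carrying the four branch points together with the common image of the interchanged points, i.e.\ five marked points, and the induced map $\iota\colon\Tgn[1,2]\simeq\Tgn[0,5]$ is a biholomorphism. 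Likewise every genus-$2$ surface is hyperelliptic, and recording the six branch points of the quotient sphere yields a biholomorphism $\iota\colon\Tgn[2,0]\simeq\Tgn[0,6]$. In both cases the target of $\iota$ satisfies $2g+n\geq 5$.

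Given a holomorphic isometric embedding $f$, I then set $h=f\circ\iota^{-1}$. Because $\iota$ is a biholomorphism, hence a Kobayashi isometry, $h$ is again a holomorphic isometric embedding, now out of a space with $2g+n\geq 5$; \Cref{theorem:geometric_isometries} shows $h$ is a covering construction, where the orientation-reversing ambiguity of that theorem does not arise since $h$ is holomorphic. Reading this back gives the asserted factorization $f=h\circ\iota$.

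The only content beyond this bookkeeping is establishing the two exceptional isomorphisms and verifying that they are genuine isometries of \TM spaces, and I expect the latter to be the main obstacle. The cleanest route is the one above: the hyperelliptic quotient is manifestly a biholomorphism at the level of complex structures, and Royden's identification of the \TM metric with the Kobayashi metric then forces the isometry property for free. A more hands-on alternative, if one prefers not to route through Royden, is to match the cotangent spaces of integrable quadratic differentials across the involution and check directly that the identification preserves the $L^1$-norm defining the co-metric; this requires keeping careful track of how the branch points and the interchanged marked points contribute, which is where the genuine (if routine) care is needed.
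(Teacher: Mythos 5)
Your proposal is correct and takes essentially the same route as the paper: cite the one-dimensional classification for (1), and for (2) and (3) set $h=f\circ\iota^{-1}$ using the exceptional isomorphisms $\Tgn[1,2]\simeq\Tgn[0,5]$ and $\Tgn[2,0]\simeq\Tgn[0,6]$ and apply \Cref{theorem:geometric_isometries}, whose hypothesis $2g+n\geq 5$ the targets $(0,5)$ and $(0,6)$ satisfy; your observation that $\iota$ is automatically an isometry because it is a biholomorphism and the \TM metric coincides with the Kobayashi metric (Royden) is exactly the right justification for what the paper leaves implicit. The one inaccuracy is in part (1): a holomorphic isometric embedding $\Delta\hookrightarrow\Tgn[g',n']$ is \emph{not} a \TM disc ``by definition''---a \TM disc is by definition the map $g_q$ generated by a quadratic differential, and the statement that every holomorphic, Kobayashi-distance-preserving map of the disc has this form is a theorem of Earle--Kra--Krushkal and Antonakoudis \cite{earle-kra-krushkal,antonakoudis-isometric}, which is precisely the citation the paper uses for this case. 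With that citation in place of ``nothing further to prove,'' your argument coincides with the paper's.
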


\begin{proof}
The case of $1$-dimensional \TM spaces was already considered by Antonakoudis \cite[Thm 1.1]{antonakoudis-isometric}.
For the remaining  \TM spaces we can compose any isometric embedding with an isomorphism to either $\Tgn[0,5]$ or $\Tgn[0,6]$, which both satisfy the assumptions of \Cref{theorem:geometric_isometries}.
\end{proof}

In particular \Cref{theorem:geometric_isometries} and \Cref{cor:low-complexity} classify all local isometric embeddings of moduli spaces $\Mgn\to \Mgn[g',n']$, since any local isometric embedding can be lifted to an isometric embedding of \TM spaces.

\subsection{Motivation via \TM dynamics}
The image of an isometric embedding is an example of  totally geodesic complex submanifold, i.e., a complex submanifold $M\subseteq\Tgn$ such that for any two points $x,y\in M$ the unique geodesic through $x$ and $y$ lies in $M$.
Wright \cite{wright-totally-geodesic} proved that totally geodesic submanifolds of dimension at least $2$ in \TM space  project to subvarieties of $\Mgn$  and  that there are only finitely many totally geodesic subvarieties of dimension at least $2$ in $\Mgn$ for fixed $g,n$.
It was originally conjectured by Mirzakhani that every totally geodesic submanifold of \TM space is the image of a covering construction, but recently in \cite{mcmullen-mukamel-wright,emmw} McMullen, Mukamel, Wright and Eskin discovered examples of totally geodesic submanifolds which are not isometric to any \TM space. Our result about isometric embeddings can be rephrased in this setting as the following.
\begin{corollary}
Let $M\subseteq \Tgn[g',n']$ be a totally geodesic submanifold with $\dim (M)\geq 2$. If $M$ is isometric (as metric space) to some \TM space with the \TM metric, then $M$ is the image of a holomorphic covering construction $f:\Tgn[g,n]\to\Tgn[g',n']$ for  some \TM space $\Tgn$.
\end{corollary}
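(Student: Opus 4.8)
The plan is to reduce the statement to \Cref{theorem:geometric_isometries} by manufacturing from the data an isometric embedding of \TM spaces whose image is exactly $M$. Suppose $M$ is isometric to some \TM space $\Tgn$ via a metric isometry $\psi:\Tgn\to M$. The first step is to check that the inclusion $j:M\hookrightarrow\Tgn[g',n']$ is itself an isometric embedding for the \TM metric. This is precisely where the hypothesis that $M$ is totally geodesic enters: for $x,y\in M$ the \TM geodesic joining $x$ and $y$ lies in $M$, so the intrinsic (path) metric of $M$ and the restriction of the ambient \TM metric coincide, and $j$ preserves distances. Consequently $f:=j\circ\psi:\Tgn\to\Tgn[g',n']$ is an isometric embedding of \TM spaces with $\mathrm{im}(f)=\psi(\Tgn)=M$.

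Next I would use Antonakoudis~\cite{antonakoudis-isometric} to assume that $f$ is holomorphic: if $f$ is antiholomorphic, I precompose it with an orientation-reversing mapping class of $\Tgn$ (which always exists and acts as a surjective self-isometry), producing a holomorphic isometric embedding with the same image $M$. The dimension hypothesis $\dim M\geq 2$ translates into $3g-3+n\geq 2$ for the source. I would then split into cases according to whether $2g+n\geq 5$. If $2g+n\geq 5$, \Cref{theorem:geometric_isometries} applies directly and shows $f$ is a covering construction; since $f$ is already holomorphic, $M=\mathrm{im}(f)$ is the image of a holomorphic covering construction.

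The remaining cases are the finitely many with $3g-3+n\geq 2$ but $2g+n\leq 4$, which are exactly $(g,n)=(1,2)$ and $(g,n)=(2,0)$. Here I would apply \Cref{cor:low-complexity} to the holomorphic isometric embedding $f$: it factors as $f=h\circ\iota$, where $\iota$ is the hyperelliptic isomorphism $\Tgn[1,2]\simeq\Tgn[0,5]$ (resp.\ $\Tgn[2,0]\simeq\Tgn[0,6]$) and $h$ is a holomorphic covering construction out of $\Tgn[0,5]$ (resp.\ $\Tgn[0,6]$). Since $\iota$ is a bijection, $M=\mathrm{im}(f)=\mathrm{im}(h)$, so $M$ is again the image of a holomorphic covering construction, now with source the genus-zero \TM space supplied by \Cref{cor:low-complexity}. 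This is what accounts for the phrase ``for some \TM space $\Tgn$'' in the statement, as the source of the covering construction need not be the \TM space to which $M$ is abstractly isometric.

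I expect the only genuinely non-formal point to be the first step: verifying carefully that total geodesy forces the induced metric on $M$ to agree with the restriction of the ambient \TM metric, so that $j$, and hence $f$, really is an isometric embedding in the sense required by \Cref{theorem:geometric_isometries}. Everything afterwards is bookkeeping --- invoking Antonakoudis to dispose of the antiholomorphic case, carrying out the dimension count $3g-3+n\geq 2$, and routing the two low-complexity exceptions through \Cref{cor:low-complexity}.
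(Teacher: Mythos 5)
Your proposal is correct and takes essentially the same route as the paper: compose the abstract isometry $\psi$ with the inclusion of $M$ to obtain an isometric embedding with image $M$, use Antonakoudis to reduce to the holomorphic case, apply \Cref{theorem:geometric_isometries} when $2g+n\geq 5$, and handle the two exceptional cases $(g,n)=(1,2),(2,0)$ via the hyperelliptic identifications with $\Tgn[0,5]$ and $\Tgn[0,6]$ as in \Cref{cor:low-complexity}. Your careful first step is fine but not strictly needed, since the paper defines the \TM metric $d_M$ on a totally geodesic submanifold to be the restriction of the ambient metric, so the inclusion preserves distances by definition.
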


\begin{proof}
Let $\Tgn$ be a \TM space which is isometric to $M$.
By composing an isometry between $\Tgn$ and $M$  with the embedding of $M$ in $\Tgn[g',n']$, we obtain an isometric embedding $f:\Tgn\hookrightarrow \Tgn[g',n']$ with $f(\Tgn)=M$. After possibly changing the orientation we can assume $f$ is holomorphic. 
If $2g+n\geq 5$, then $f$ is a covering construction by \Cref{theorem:geometric_isometries}.  In the remaining case $2g+n \leq 4$ we conclude from $\dim \Tgn =\dim M  \geq 2$, that either $(g,n)=(1,2)$ or $(g,n)=(2,0)$. We proceed as in the proof of \Cref{cor:low-complexity} and use the isometries $\Tgn[1,2]\simeq \Tgn[0,5]$ and $\Tgn[2,0]\simeq\Tgn[0,6]$ to reduce to the previous case $2g+n\geq 5$.
 \end{proof}

\subsection{Rigidity of isometric embeddings}
Covering constructions  descend to algebraic maps of finite coverings of the moduli space of curves and  thus we obtain a rigidity result for isometric embeddings from  \Cref{theorem:geometric_isometries}.

\begin{corollary}\label{cor-rigid}
    Let $f:\Tgn\to\Tgn[g',n']$ be an isometric embedding and assume $\dim \Tgn \geq 2.$ Then there exists a finite cover $\widetilde{\calM}_{g,n}\to \Mgn[g,n]$ and an algebraic, local isometry $\widetilde{f}:\widetilde{\calM}_{g,n}\to\Mgn[g',n']$ of coarse moduli spaces.
    Conversely, assume that $f:\widetilde{\calM}_{g,n}\to \widetilde{\calM}_{g',n'}$ is a local isometric embedding where $\widetilde{\calM}_{g,n}\to\Mgn,\, \widetilde{\calM}_{g',n'}\to\Mgn[g',n']  $ are finite orbifold covering maps and $\dim \Mgn \geq 2.$
    Then $f$ is induced from a covering construction $\widetilde{f}:\Tgn\to\Tgn[g',n']$ and furthermore 
     the set of local isometric embeddings $f:\widetilde{\calM}_{g,n}\to \widetilde{\calM}_{g',n'}$ is finite. 
    
\end{corollary}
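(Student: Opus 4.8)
For the forward direction my plan is to apply Antonakoudis's theorem and then \Cref{theorem:geometric_isometries}, invoking \Cref{cor:low-complexity} when $2g+n\le 4$ (which is permitted, since $\dim\Tgn\ge 2$ forces $(g,n)\in\{(1,2),(2,0)\}$). After composing with an orientation-reversing mapping class if necessary, this presents $f$ as a covering construction associated to a branched covering $h:\Sigma'\to\Sigma$. The key point is the equivariance of covering constructions: the subgroup $\Gamma\le\Mod(\Sigma)$ of mapping classes that lift through $h$ has finite index, and the lifting homomorphism $\rho:\Gamma\to\Mod(\Sigma')$ satisfies $f\circ\gamma=\rho(\gamma)\circ f$ for all $\gamma\in\Gamma$. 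Setting $\widetilde{\calM}_{g,n}:=\Tgn/\Gamma$, a finite orbifold cover of $\Mgn$, this equivariance makes $f$ descend to $\widetilde f:\widetilde{\calM}_{g,n}\to\Mgn[g',n']$. Since $f$ is an isometric embedding and the orbifold covering maps are local isometries, $\widetilde f$ is a local isometry; its algebraicity I would obtain by realizing the covering construction as the structure map of a Hurwitz space, as developed in \Cref{appendix}.

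For the converse, since $\Tgn$ and $\Tgn[g',n']$ are contractible and hence serve as orbifold universal covers, a local isometric embedding $f:\widetilde{\calM}_{g,n}\to\widetilde{\calM}_{g',n'}$ lifts to a map $\widetilde f:\Tgn\to\Tgn[g',n']$ which is again a local isometric embedding. I would then upgrade $\widetilde f$ to a global distance-preserving map and apply \Cref{theorem:geometric_isometries}, together with \Cref{cor:low-complexity} (using $\dim\Tgn=\dim\Mgn\ge 2$), to conclude that $\widetilde f$ is a covering construction, so that $f$ is induced by it.

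For the finiteness statement, the Riemann--Hurwitz formula bounds the degree $d$ of any admissible branched covering by $d\le |\chi(\Sigma')|/|\chi(\Sigma)|$, since $\chi(\Sigma)=2-2g-n<0$ and the total ramification contribution is nonnegative. Hurwitz's classification of branched coverings with bounded degree and fixed domain and range then produces only finitely many $h:\Sigma'\to\Sigma$ up to isomorphism, hence finitely many covering constructions up to the actions of $\Mod(\Sigma)$ and $\Mod(\Sigma')$ by pre- and post-composition. Because $\Gamma$ and the deck group of the target cover are finite index, only finitely many of the resulting descended maps $f$ can occur, giving the asserted finiteness.

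The step I expect to be the main obstacle is upgrading the lifted local isometric embedding $\widetilde f$ to a global distance-preserving map in the converse. This amounts to showing that in \TM space a locally length-minimizing path is globally length-minimizing, equivalently that local \TM geodesics are global \TM geodesics, so that $\widetilde f$ sends the unique geodesic between two points to a geodesic of equal length and thus preserves distances. The completeness and unique geodesic structure of the \TM metric are what make this work, but it must be argued directly rather than borrowed from the non-positively curved setting, since \TM space is not \textup{NPC}; this, together with verifying that the descended map in the forward direction is genuinely algebraic, is where the real care is needed.
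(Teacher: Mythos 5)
Your proposal is correct and follows essentially the same route as the paper's proof: the forward direction via \Cref{theorem:geometric_isometries} (with \Cref{cor:low-complexity} covering $(g,n)\in\{(1,2),(2,0)\}$) and descent along the finite-index subgroup of mapping classes that lift through $h$; the converse via an equivariant lift to \TM space, the local-to-global upgrade using that \TM geodesic segments lie on globally minimizing \TM lines (you correctly isolate the delicate point that the paper compresses into the single sentence ``since on $\Tgn$ the distance between two points is realized by a geodesic''); and finiteness via the Riemann--Hurwitz degree bound, finiteness of monodromy data, and coset counting for the finite-index subgroups $\Gamma,\Gamma'$, exactly as in the paper. Two cosmetic corrections: the lifting assignment $\gamma\mapsto\rho(\gamma)$ need \emph{not} be a group homomorphism, since lifts are only well-defined up to deck transformations --- the paper flags precisely this, which is why the target of $\widetilde f$ is the coarse space $\Mgn[g',n']$ rather than an orbifold, and your descent still goes through because it only uses $f\circ\gamma\in\Mod(g',n')\cdot f$; and \Cref{appendix} treats realizability of branching tuples, not Hurwitz spaces, though the paper likewise asserts rather than proves algebraicity of $\widetilde f$.
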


We will give the proof of \Cref{cor-rigid} after the proof of \Cref{theorem:geometric_isometries}.

\subsection*{Related work}
In a different direction Royden's theorem can be generalized  by considering isometric submersions.
An {\em isometric submersion} is a holomorphic map $f:\Tgn\to\Tgn[g',n']$ such that the induced map $df:T\Tgn\to T\Tgn[g',n']$ on tangent spaces maps the unit ball onto the unit ball. In \cite{isometric-submersions} Gekhtman and Greenfield show that, if $g'\geq 1, 2g'+n'\geq5$, every holomorphic isometric submersion $f:\Tgn\to\Tgn[g,',n']$ for the \TM metric is a forgetful map $\Tgn[g,n]\to\Tgn[g,m], n\geq m,$ which forgets some of the marked points.
 
 We also note that there are examples of holomorphic maps of moduli spaces that are neither isometric embeddings nor isometric submersions, for example in \cite{leininger-souto} Aramayona, Leininger and Souto have an example constructed as a composition of a covering construction with a forgetful map, i.e., an isometric embedding followed by an isometric submersion.

\subsection{Outline of the proof of \Cref{theorem:geometric_isometries}}
By \cite{antonakoudis-isometric} every orientation-preserving isometric embedding $f:\calT(\Sigma)\to\calT(\Sigma')$ is holomorphic. Thus from now on we assume $f$ is holomorphic.
To show that $f$ is a covering construction one needs a way to build a branched covering from $f$. At each point $X \in \calT(\Sigma)$ we can identify the cotangent space $T^*_X\calT(\Sigma)$ with the space of integrable quadratic differentials $Q(X)$, and the map $f$ induces a map $f^*:Q(X)\to Q(Y)$ between cotangent spaces, where $Y=f(X)$. If $f$ were a biholomorphism, as in the case of Royden's theorem, then $f^*:Q(Y)\to Q(X)$ is a $\CC$-linear isometry. Royden \cite{royden-automorphisms} then showed that every $\CC$-linear isometry $Q(Y)\to Q(X)$ arises from an isomorphism $X\to Y$.
In \cite{markovic} Markovic gives a new proof of Royden's theorem and shows more generally that every $\CC$-linear isometric embedding $Q(Y)\to Q(X)$ of spaces of quadratic differentials is induced by a branched covering $X\to Y$. It follows that every holomorphic map $f:\calT(\Sigma)\to\calT(\Sigma')$, such that  $f^*$ is an isometric embedding, is a forgetful map, see \cite{isometric-submersions}.

In the case $f$ is a holomorphic, isometric embedding, the induced map $f^*:Q(Y)\to Q(X)$ is surjective instead of injective and we cannot apply Markovic's result directly.
Our workaround is to  define \emph{an umkehr} (backwards) map \[f_!:Q(X)\to Q(Y)\] going in the other direction.
From the definition it will be clear that $f_!$ is continuous and norm preserving. The main technical difficulty is to show that in fact $f_!$ is $\CC$-linear, this is done is \Cref{section-umkehr}. Afterwards we can apply Markovic's theorem and then the proof roughly follows the proof of Royden's theorem given in \cite{markovic}.
We will define the umkehr map in detail in \Cref{section-umkehr} but the
idea is as follows.
A quadratic differential $q\in Q\calT(X)$ in the cotangent space  defines a geodesic $g_q$ in $\calT(\Sigma)$. The isometric embedding $f$ maps geodesics in $\calT(\Sigma)$ to geodesics in $\calT(\Sigma')$. In particular the image $f(g_q)$ is a geodesic, which is generated by a quadratic differential $q'$. The umkehr map $f_!$ sends $q$ to $q'$.

\subsection{Acknowledgements}
We would like to thank Benson Farb for many useful discussions and comments on an earlier draft. We also thank Howard Masur, Alex Wright and Dan Margalit for helpful discussions.

\section{Preliminaries}\label{sec-prelim}

\subsection*{Isometric embeddings and submersions}
Let $(V,\nm{\cdot}), (W,\nm[W]{\cdot})$ be two finite-~dimensional normed vector spaces and $T:V\to W$ be a linear map.
We say $T$ is an {\em isometric embedding} if $\nm{v}=\nm[W]{Tv}$ for all $v\in V$.

We say $T$ is an {\em isometric submersion} if the image of the closed unit ball in $V$ is the closed unit ball in $W$.

The dual of an isometric embedding is an isometric submersion and similarly, the dual of an isometric submersion is an isometric embedding.
An isometric submersion can alternatively be characterized by the property
\[
\nm[W]{v'} = \inf_{v\in V:Tv=v'} \nm{v}.
\] 
In a similar way isometric embeddings and submersion can also be defined for normed vector bundles. Here we recall that a normed vector bundle is a holomorphic vector bundle over a complex manifold with a function, which is $C^1$ outside the zero section and restricts to a norm on each fiber.

\subsection*{The \TM metric} We start by recalling some basic facts about the \TM metric. Fix $g,n$ with $3g-3+n>0$ and let $\Tgn$ be the \TM space of genus $g$ surfaces with $n$ marked points. If we need to specify a base point we write $\Tgn = \calT(\Sigma,z)$, where $\Sigma$ is a compact topological surface and $z\subset \Sigma$ is a finite set of points.
A point in $\Tgn$ is determined by a compact Riemann surface $X$ with a marking $\phi:\Sigma \to X$.
The marked points on $X$ are given by $\phi(z)$.
If it is clear from the context we omit the marking $\phi$ and the marked points $\phi(z)$ and ambiguously write $X\in \Tgn$.
We always work with compact surfaces and marked points rather than punctured surfaces.

Let $d$ be the \TM metric on $\Tgn$. Endowed with the metric $d$, \TM space is a complete metric space such that through any two points there exists a unique geodesic.

The tangent space of $\Tgn$ at $X\in\Tgn$ can be identified with the space of all Beltrami forms, i.e., $(1,-1)$-forms $\mu$ on $X$ with $\nm[\infty]{\mu}<\infty$, modulo infinitesimally trivial Beltrami forms. Dually the cotangent space ${(T^*\Tgn)}_{X}$ at $X$ is identified with the space $Q(X)$ of integrable quadratic differentials on the punctured surface $X - \phi(z)$ or equivalently meromorphic quadratic differentials on $X$ with at most simple poles at the marked points $\phi(z)$ and holomorphic elsewhere.
If there is ambiguity about the marked points we write $Q(X,z)$ instead.
Let $\QTgn$ be the bundle of integrable quadratic differentials over $\Tgn.$
For $q\in Q(X)$, the {\em area norm} $\nm[Q\Tgn]{q}$ is defined by \[
\nm[Q\Tgn]{q}:=\int_X |q|
\] and turns $Q\Tgn$ into a normed vector bundle $(Q\Tgn,\nm[\QTgn]{\cdot})$. 
The pairing between $T\Tgn$ and $T^*\Tgn$ is given by the pairing
\[
\left\langle\mu,q\right\rangle\mapsto \int_X \mu q
\]
between Beltrami forms and quadratic differentials. The infinitesimally trivial Beltrami forms $\mu$ are characterized by the condition
\[
\langle\mu, q \rangle =0 \text{ for all } q\in Q(X).
\]
The dual of the area norm defines a norm $\nm[T\Tgn]{\cdot}$ on the the tangent space to $\Tgn$ by 
\[
\nm[T\Tgn]{\mu} = \sup_{ \{\phi: \nm[Q\Tgn]{\phi}=1\}}\left|\int_{X} \mu\phi\right|.
\]

The norm $\nm[T\Tgn]{\mu}$ can be also computed as 
\[
\nm[T\Tgn]{\mu} = \inf_{\mu'} \nm[\infty]{\mu+\mu'},
\]
where $\mu'$ runs over all infinitesimally trivial Beltrami forms and $\nm[\infty]{\cdot}$ is the essential supremum. For a proof see  \cite[Cor. 6.6.4.1]{hubbard}.

The \TM metric $d$ is obtained by integrating $\nm[T\Tgn]{\cdot}$, i.e., if $\gamma:[0,1]\to \Tgn$ is a \TM geodesic connecting $x,y\in\Tgn$ then \[
d(x,y)=\int_0^1 \nm[\Tgn]{\gamma'(t)}dt.
\]
The norm $\nm[T\Tgn]{\cdot}$ is called  the {\em infinitesimal \TM metric}. Both norms $\nm[T\Tgn]{\cdot}$ and $\nm[Q\Tgn]{\cdot}$ are $C^1$ away from the zero section and  strictly convex, see \cite[9.3 Thm. 3 + Lemma 3]{gardiner}.
Every Beltrami form $\mu$ of norm $1$ can be represented uniquely as $\mu = \tmuqu$ for a quadratic differential $q$ of area $1$. Since the norm $\nm[\QTgn]{\cdot}$ is strictly convex, the differential $q$ can be characterized as the unique quadratic differential of norm $1$ such that
\[
\left\langle\mu,q \right\rangle = 1.
\]

\subsection*{\TM discs}
Let $(X,q)\in Q\Tgn.$
The {\em \TM disc} generated by $q$ is the  holomorphic map $g_q:\Delta\to\Tgn$ given by $t\mapsto X_{\mu_t}$ where $\mu_t=t\muqu$ and $X_{\mu_t}$ is obtained by solving the Beltrami equation for $\mu_t$.
 Results of Antonakoudis and Earle-Kra-Krushkal \cite{antonakoudis-isometric,earle-kra-krushkal} show that every  distance preserving map $\Delta\hookrightarrow\Tgn$ for the Kobayashi metrics is a \TM disc, up to complex conjugation.
 The complex tangent vector of $g_q$  at the origin is  $(g_{q,0})_*\partial_z= \muqu$. We refer to $g_q$ as a complex geodesic. The restriction of $g_q$ to any radial line is a real geodesic for the \TM metric.

 \subsection*{Totally geodesic submanifolds}

 A submanifold $M\subseteq\Tgn$ is called {\em totally geodesic} if for all $x,y\in M$ the unique \TM geodesic between $x$ and $y$ is contained in $M$. 
 The {\em \TM metric} $d_M$ of $M$ is the restriction of the \TM metric of $\Tgn$ to $M$.
 More generally if $M,N\subseteq \Tgn$ are totally geodesic submanifolds and $M\subseteq N$, then we call $M$ a totally geodesic submanifold of $N$.
Every  geodesic for the \TM metric is the restriction of a holomorphic \TM disc and thus any totally geodesic submanifold is a complex submanifold.

\begin{definition}
Let $M\subseteq \Tgn, \, N\subseteq \Tgn[g',n']$ be totally geodesic submanifolds of \TM spaces.
A map $f:M\to N$ is called an {\em isometric embedding}  if it is distance-preserving for the \TM metrics $d_{M}$ and $d_{N}$, i.e., 
\[
d_{M}(x,y)=d_{N}(f(x),f(y)) \text{ for all } x,y\in M.
\]
\end{definition}
In particular an isometric embedding $f$ sends geodesics to geodesics. By \cite[Thm. 4.3]{antonakoudis-isometric} any  isometric embedding $f$ is either holomorphic or anti-holomorphic. For the rest of this paper we will assume that all isometric embeddings are holomorphic.

\begin{proposition}
Let $f:M\to N$ be a holomorphic, isometric embedding of totally geodesic submanifolds of \TM spaces. Then $f$ is an immersion and $f(M)\subseteq N$ is a totally geodesic submanifold.
\end{proposition}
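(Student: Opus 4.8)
The plan is to prove the two assertions in order: first that $df_x$ is injective at every point (so $f$ is an immersion), by showing that $f$ is an \emph{infinitesimal} isometry, and then that $f(M)$ is a totally geodesic submanifold, by exploiting that $f$ sends geodesics to geodesics together with the uniqueness of Teichm\"uller geodesics.

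For the immersion claim I would use that the \TM metric is a length (Finsler) metric obtained by integrating the infinitesimal norm $\nm[T\Tgn]{\cdot}$, which by the excerpt is $C^1$ away from the zero section. The key input is the standard length-space formula: for any $C^1$ curve $\gamma$ in $M$ with $\gamma(0)=x$,
\[
\nm[TM]{\gamma'(0)} = \lim_{t\to 0^+}\frac{d_M(\gamma(0),\gamma(t))}{t},
\]
where the $\leq$ direction is immediate from $d_M(\gamma(0),\gamma(t))\leq \int_0^t \nm[TM]{\gamma'(s)}\,ds$ and the $\geq$ direction follows from comparing the \TM distance with the frozen norm $\nm[T\Tgn]{\cdot}$ at $x$ in a coordinate chart. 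Since $f$ is holomorphic it is smooth; applying the formula to $\gamma$ and to $f\circ\gamma$ and using $d_N(f(a),f(b))=d_M(a,b)$ yields $\nm[TN]{df_x(\gamma'(0))}=\nm[TM]{\gamma'(0)}$. As every $v\in T_xM$ occurs as some $\gamma'(0)$, the differential $df_x\colon T_xM\to T_{f(x)}N$ is a linear isometric embedding of normed spaces, hence injective, so $f$ is an immersion.

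To see that $f(M)$ is totally geodesic, take $p=f(x)$ and $q=f(y)$ in $f(M)$. Since $M$ is totally geodesic the geodesic segment $[x,y]$ lies in $M$; parametrizing it by arc length and using that $f$ preserves distances shows that $f([x,y])$ is again an arc-length geodesic of $\Tgn[g',n']$ joining $p$ and $q$. As \TM space has a \emph{unique} geodesic through any two points, $f([x,y])$ must equal the geodesic $[p,q]$, and since $[x,y]\subseteq M$ we conclude $[p,q]=f([x,y])\subseteq f(M)$; here we use that $N$ is itself totally geodesic, so geodesics of $N$ between its points coincide with the ambient ones. Finally, to upgrade $f(M)$ to an embedded submanifold, observe that $f$ is injective, since $d_N(f(x),f(y))=d_M(x,y)=0$ forces $x=y$, and that $f$ is a homeomorphism onto its image because it is a metric-space isometry onto $(f(M),d_N)$, whose metric induces the subspace topology; an injective immersion that is a topological embedding is a smooth embedding.

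I expect the main technical point to be the infinitesimal-isometry step of the second paragraph: one must justify that a merely distance-preserving (but smooth) map between these $C^1$ Finsler length spaces is norm-preserving on tangent vectors, which rests on the local comparison of the \TM distance with the pointwise norm $\nm[T\Tgn]{\cdot}$. If one prefers to avoid the general length-space limit, a clean alternative is to realize each direction $v\in T_xM$ by an honest \TM geodesic contained in $M$ — available since $M$ is totally geodesic and ambient geodesics exist in every direction — and to note that $f$ maps it to a constant-speed geodesic of the same speed, giving $\nm[TN]{df_x(v)}=\nm[TM]{v}$ directly; this variant instead requires the continuity of geodesic directions in their endpoints.
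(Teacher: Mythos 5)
Your proof is correct, and its second half coincides with the paper's argument: the paper likewise takes preimages $a,b\in M$ of two points of $f(M)$, uses that the geodesic through $a,b$ lies in $M$ because $M$ is totally geodesic, and concludes by uniqueness of \TM geodesics that its $f$-image is \emph{the} geodesic through the image points. Where you genuinely differ is the immersion step. The paper argues that each $v\in T_XM$ generates a unique geodesic $g_v$, that $f(g_v)$ is again a geodesic, and that a geodesic has nonzero tangent vector — which is precisely the ``clean alternative'' you sketch in your closing paragraph. Your main route, via the metric-derivative formula $\nm[TM]{\gamma'(0)}=\lim_{t\to 0^+}d_M(\gamma(0),\gamma(t))/t$ for the $C^1$ Finsler norm, is more quantitative: it shows $df_x$ is a norm-preserving linear map for the infinitesimal \TM norms, not merely injective. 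This buys two things. First, it sidesteps a point the paper's (and your alternative) geodesic argument leaves implicit, namely that the ambient geodesic tangent to $M$ in direction $v$ actually lies in $M$ so that $f$ can be applied to it — arbitrary $C^1$ curves in $M$ realize every tangent vector, no geodesic needed; note also that the nonvanishing of the tangent vector of the image geodesic, which the paper asserts, itself secretly rests on the same local comparison of distance with the frozen norm that you make explicit. Second, the stronger conclusion that $f_*$ preserves the infinitesimal norm is exactly what the paper later uses tacitly in the umkehr-map section, where $f_*$ of the unit Beltrami form $\overline{q}/|q|$ is taken to be a unit form $\overline{q'}/|q'|$. Conversely, the paper's argument is shorter and avoids any local metric comparison. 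Your final upgrade — injectivity from distance preservation plus the fact that a metric isometry onto $(f(M),d_N)$ is a topological embedding, so the injective immersion is a smooth embedding — correctly fills in the paper's terse ``thus $f(M)$ is a submanifold,'' using, as you say, that $d_M$ and $d_N$ are restrictions of the ambient \TM metric.
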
 
\begin{proof}
For every tangent vector $v\in T_XM$ in the tangent space over $X\in M$ there exists a unique geodesic $g_v$ through $X$ with tangent vector $v$.
The isometric embedding $f$ sends $g_v$ to a geodesic in $N$, which necessarily has  non-zero tangent vector.
Thus $f$ is an immersion and $f(M)$ is a submanifold. It remains to see that $f(M)$ is totally geodesic.
Let $x,y \in f(M)$ and choose preimages $a,b\in M$ with $f(a)=x, f(b)=y$. Since $M$ is totally geodesic the unique geodesic $g_{a,b}$ through $a$ and $b$ is contained in $M$. The image $f(g_{a,b})$ is the unique geodesic through $x$ and $y$.
Here we used again that $f$ sends geodesics to geodesics.
\end{proof}

\subsection*{Covering constructions}
Let $\tops, \tops'$ be  topological surfaces of genus $g$ and $g'$, respectively,
and $h:\tops'\to\tops$ a branched covering.
Let $z\subseteq\tops$ a collection of marked points containing all branch points of $h$ and set $z'= h^{-1}(z)$.
There exists a holomorphic  map

\begin{gather*}
H:\calT(\Sigma,z)\to \calT(\tops',z'),\\ (X,\phi:\tops\to X, \phi(z))\mapsto(X',\phi':\tops'\to X', \phi'(z')),
\end{gather*}
where $X'$ is obtained by pulling back complex structures under the branched covering $h$.

Since \TM maps pull back to \TM maps, $H$ is a holomorphic isometric embedding.
We refer to $H$ as a {\em totally marked covering construction}. A general covering construction is obtained from a totally marked covering constructions by forgetting marked points.

\begin{definition}\label{def-covering}
A {\em covering construction} is a holomorphic map $G:\calT(\Sigma,u)\to \calT(\tops',v')$ fitting in a commutative diagram
\begin{equation}
\label{eq-tm-covering}
\begin{tikzcd}
    \calT(\tops,z)\ar[r,"H"]\ar[d,swap] & \calT(\tops',z')\ar[d]\\
    \calT(\tops,u)\ar[r,"G",swap] & \calT(\tops',v)
\end{tikzcd}
\end{equation}
where the vertical maps are forgetful maps and $H$ is a totally marked covering construction. In particular this requires $u\subseteq z, v\subseteq z'$.
\end{definition}

\begin{remark}
The above definition might not appear as the  most natural one, since depending on the choice of marked points $u\subseteq\Sigma, v\subseteq\Sigma'$ it is not obvious that $G$ is well-defined and even if $G$ is well-defined it might not be an isometric embedding.
In the \Cref{appendix} we discuss sufficient and necessary conditions on the branched covering and the marked points $u\subseteq\Sigma, v\subseteq\Sigma'$ for the map $G$ to be a well-defined isometric embedding.
The results are not necessary for our proof of \Cref{theorem:geometric_isometries} and can be skipped during a first reading.
\end{remark}

\section{The geodesic bundle of totally geodesic submanifolds}

The {\em geodesic bundle} $\QM\subseteq \QTgn$ over a totally geodesic submanifold is the set of all pairs $(X,q)$ with $X\in M$ and $q$ a quadratic differential generating a \TM geodesic completely contained in $M$.

We start by showing that $QM$ is a  topological submanifold.
Let $Q_1\Tgn$ be the bundle of quadratic differentials with norm less than $1$.
The map \[
\begin{split}
E:\,Q_1\Tgn&\to\Tgn \times \Tgn\\
(X,q)&\mapsto (X,X_{\mu}),
\end{split}
\]
 where $\mu=\muq$ and $X_{\mu}$ is obtained by solving the Beltrami equation for $\mu$,
is a homeomorphism by a result of Earle \cite{earle}.
Recall that complex geodesics are parametrized by $t\mapsto X_{\mu_t},$ where  $X_{\mu_t}$ is the surface obtained by solving the Beltrami equation for $\mu_t=t\muqu$. Since $M$ is totally geodesic we have 
\begin{equation}\label{eq:emap}
E(Q_1M)=M\times M,
\end{equation}
where $Q_1M=Q_1\Tgn \cap QM$.
Therefore $Q_1M$ is a topological submanifold and thus the same is true for $QM$.

Rees showed that $E$ is not $C^1$ \cite[Thm. 1.]{rees}, therefore it is \textit{a priori} not clear that $QM$ is even a real submanifold. 
The goal of this section is to show that in fact $QM$ is a holomorphic subbundle of the cotangent space of $\Tgn$.  

\begin{proposition} \label{proper-lin-bundle}
Let $M\subseteq\Tgn$ be a totally geodesic  submanifold of dimension $d$. Then the geodesic bundle $QM\subseteq \QTgn$ is a holomorphic subbundle of rank $d$. 
\end{proposition}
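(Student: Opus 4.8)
The plan is to reduce everything to a single analytic statement: that $QM$ is a \emph{complex-analytic} subset of $\QTgn$. Before addressing that, I would first collect the formal structure already at hand. Rescaling $q\mapsto\lambda q$ for $\lambda\in\CC^*$ multiplies the generating Beltrami form $\tmuqu$ by $\bar\lambda/|\lambda|$, which preserves the complex subspace $T_XM\subseteq T_X\Tgn$; hence $QM$ is invariant under the fibrewise $\CC^*$-action, i.e.\ a cone bundle. Moreover, since $E$ is a homeomorphism and $E(Q_1M)=M\times M$ is smooth --- including along the diagonal, which corresponds to the zero section $q=0$ via $E(X,0)=(X,X)$ --- each fibre $Q_XM:=QM\cap Q(X)$ is a topological submanifold of $Q(X)$ of real dimension $2d$ passing through the origin.

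The second ingredient I would set up is the fibrewise restriction map $r\colon\QTgn|_M\to T^*M$ dual to the isometric inclusion of tangent spaces $T_XM\hookrightarrow T_X\Tgn$; because $TM$ is a holomorphic subbundle, $r$ is a holomorphic bundle surjection with kernel $\ANN(T_XM)$, and fibrewise it is an isometric submersion. The key computation is the identification
\[
Q_XM=\bigl\{\,q\in Q(X):\nm[T^*_XM]{r(q)}=\nm[\QTgn]{q}\,\bigr\},
\]
the locus where $r$ preserves the norm. Indeed, if $\tmuqu\in T_XM$ then pairing $q$ against its dual unit tangent, which now lies in $T_XM$, gives $\nm[T^*_XM]{r(q)}\geq\langle\tmuqu,q\rangle=\nm[\QTgn]{q}$, while $r$ is non-expanding; conversely, if $r$ preserves the norm of $q$, taking the unit tangent vector in $T_XM$ dual to $r(q)$ and invoking the uniqueness of the representation $\mu=\tmuqu$ from \Cref{sec-prelim} (strict convexity of the area norm) forces $\tmuqu\in T_XM$. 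The same uniqueness shows $r$ restricts to a norm-preserving bijection $Q_XM\to T^*_XM$; in particular $Q_XM\cap\ANN(T_XM)=0$.

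Granting that $QM$ is complex-analytic, the proposition follows formally. The restriction $r|_{QM}\colon QM\to T^*M$ is proper (norm-preserving over a manifold base) and a bijective holomorphic map onto the smooth total space $T^*M$, so by Zariski's main theorem it is a biholomorphism; hence each fibre $Q_XM$ is a \emph{smooth} complex-analytic cone, and a smooth complex-analytic cone through the origin equals its tangent space and is therefore a linear subspace. Thus the fibres are $\CC$-linear of complex dimension $d$ and, $QM$ being analytic, vary holomorphically, so $QM$ is a holomorphic subbundle of rank $d$.

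The crux is therefore to prove that $QM$ is complex-analytic, and this is exactly where the difficulty flagged via Rees \cite{rees} reappears: the defining condition $\tmuqu\in T_XM$ --- equivalently the equality locus above --- is built from the map $q\mapsto\overline q/|q|$, which is only real-analytic, so neither $E|_{Q_1M}$ nor the equality locus is manifestly complex-analytic. I expect this to be the main obstacle. The route I would pursue is to promote the real-analytic equality locus to a complex-analytic one using the infinitesimal rigidity of the Teichm\"uller (area) norm in the spirit of Royden \cite{royden-automorphisms} and Markovic \cite{markovic}: the norm-preserving condition for the isometric submersion $r$ should, after exploiting this rigidity, be expressible as the vanishing of a holomorphic section, cutting out $QM$ analytically. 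Establishing this holomorphic description of the equality locus is the technical heart of the argument.
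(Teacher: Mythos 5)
Your endgame is essentially the paper's, and where it differs it is correct: once one knows $QM$ is a complex-analytic subvariety of $\QTgn$, the paper also considers the map to $T^*M$ (its $\iota$ is your $r|_{QM}$), shows it is norm-preserving, hence injective and proper, deduces bijectivity by invariance of domain, upgrades to a biholomorphism (the paper via the proper mapping theorem plus Riemann's removable singularity theorem on the smooth locus, you via the analytic Zariski main theorem --- these are interchangeable here), and concludes linearity of the fibers from homogeneity. Your duality argument for norm preservation --- pairing $q$ against $\tmuqu\in T_XM$ and using that the quotient norm is non-expanding, with strict convexity for the converse --- is a clean alternative to the paper's computation via Royden's first-derivative formula for the norm along $q+V$, $V=\ker({\QTgn}_{|M}\to T^*M)$, and both arguments prove the same statement. (One small caveat: identifying the equality locus with $Q_XM$ uses the converse implication that $\tmuqu\in T_XM$ forces the geodesic of $q$ to stay in $M$, which needs $E(Q_1M)=M\times M$ rather than strict convexity alone; this matches conventions the paper also uses.)

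The genuine gap is precisely the step you defer: proving that $QM$ is complex-analytic. You propose to promote the equality locus $\{\,q:\nm[T^*M]{r(q)}=\nm[\QTgn]{q}\,\}$ to an analytic set via ``infinitesimal rigidity'' in the spirit of \cite{royden-automorphisms,markovic}, but no construction of the claimed holomorphic section is given, and it is not clear one exists: the Royden--Markovic rigidity results concern $\CC$-linear isometries between fibers $Q(X)\to Q(Y)$, not the norm-equality locus inside a single bundle, and since by \cite{rees} the relevant maps have low regularity, the equality locus is \emph{a priori} only a closed subset. The paper's actual proof of analyticity uses input of an entirely different nature, absent from your proposal: project $M$ and $QM$ to moduli space; by Wright's theorem \cite{wright-totally-geodesic} the image $N$ is an algebraic subvariety and $QN$ is closed and $\GL(2,\RR)$-invariant; by Filip's algebraicity theorem \cite{filip-algebraicity} the intersection of $QN$ with each stratum is algebraic, so $QN$ is constructible and closed, hence algebraic, and $QM$ is an irreducible component of its preimage in $\QTgn$, hence analytic. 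Note also that the logical order in the paper is the reverse of what you suggest: the linearity of $QM$ (via Wright--Filip) is what later makes the fibrewise Markovic-type theorem applicable (in the proof of \Cref{theorem:geometric_isometries}), not a tool for establishing analyticity. Without the $\GL(2,\RR)$-invariance/algebraicity input, your argument cannot start.
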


\begin{proof}
As a first step we show that $\QM$ is an analytic subvariety of $\QTgn$.
Let $N$ be the projection of $M$ to the moduli space $\Mgn$ and similarly $QN$ the projection of $QM$ to $Q\Mgn$.  Wright \cite[Thm. 1.1]{wright-totally-geodesic} proved that $N$ is an algebraic subvariety and $QN$ is closed and $\GL(2,\RR)$-invariant. Thus, by Filip's theorem on algebraicity of $\GL(2,\RR)$-invariant subsets  \cite[Thm 1.1]{filip-algebraicity}, it follows that the intersection of $QN$ with any stratum $Q\Mgn(\mu)$ is an algebraic variety. We conclude that $QN$ is closed and constructible, hence an algebraic variety. Now $QM$ is an irreducible component of the preimage of $QN$ in $\Tgn$ and therefore an analytic subvariety.

It remains to show that $\QM$ is a linear subbundle of $Q\Tgn$. Let $T^*M$ be the cotangent bundle of $M$ and \[\iota:QM\to {\QTgn}_{|M}\to T^*M\]
be the composition of the inclusion with the pullback on cotangent bundles.
In particular $\iota$ is holomorphic. We claim that $\iota$ is injective and proper.

To show the claim we will first show that $\iota$ is norm-preserving, where we endow $T^*M$ with the quotient norm $\nm[T^*M]{\cdot}$.

Let $V= \ker({\QTgn}_{|M}\to T^*M)$. Fix $q\in QM$ and let $q'=\iota(q)$. By definition of the quotient norm we have
\[
\nm[T^*M]{q'} = \inf_{v \in V} \norm{q+v}.
\]
Since  $\dim(V)<\infty$, the infimum is achieved and since the norm $\nm[\QTgn]{\cdot}$ on $Q\Tgn$ is strictly convex the minimum is unique.
It thus suffices to show that $q$ is a critical point for the norm restricted to $q+V.$
From now on assume $q\neq 0$.  Royden \cite{royden-automorphisms} showed that the norm is differentiable on $q+V$ and if we set $g(t)=\nm[\QTgn]{q+tv}$ for some $v\in V$, then 
\[
g'(0)= \re \int_X v\muqu.
\]
Thus $q$ is a critical point if $\re \int_X v\muqu$ for all $v\in V$. Since $V$ is a complex vector space this is equivalent to 
\[
 \left\langle \tmuqu,v\right\rangle =\int_X v\tmuqu = 0 \text{ for all } v\in V.
\]
Therefore $q\in \QTgn$ is a critical point for the norm on $q+V$ if and only if $\tmuqu \in \ANN(V)$, where $\ANN(V)$ is the annihilator of $V$ under the pairing $\langle \cdot,\cdot\rangle : T\Tgn \times \QTgn \to \CC$.

To conclude the proof that $\iota$ is norm preserving it thus suffices to show  $\tmuqu\in\ANN(V)$ for all $q\in QM$. Let  $q\in QM$ and let $i:TM\hookrightarrow T\Tgn$ be the inclusion of tangent bundles. Then the tangent vector $\tmuqu$ is contained in $TM$ and $\iota$ is dual to $i$, thus 
\[
\langle \tmuqu,v\rangle = \langle i(\tmuqu),v\rangle = \langle \tmuqu,\iota(v)\rangle =0 \text{ for all } v\in V.
\]

 Once we know that $\iota$ is norm preserving it follows that $\iota$ is injective, since if $q'=\iota(q)$ then $q$ is the unique norm minimizer in $q+V$. To see that  $\iota$ is also proper, take any sequence $(X_n,q_n)$ in $QM$ leaving any compact set. In particular $\nm[QM]{q_n}\to\infty$ as $n\to\infty.$
 Since $\iota$ is norm-preserving it follows that $\iota(X_n,q_n)$ also leaves every compact set.

 Thus the map $\iota:QM\to T^*M$ is holomorphic, injective and proper. Both $QM$ and $T^*M$ are topological submanifolds of the same dimension  and thus  $\iota$ is open  by invariance of domain.
Since $M$ is connected, the same is true for $T^*M$. Thus $\iota$ is surjective and therefore a homeomorphism.

So far we have seen that $\iota$ is a holomorphic bijection.
The next goal is to show that indeed $
\iota$ is a biholomorphism.
If it was already known that $QM$ and $T^*M$ are complex manifolds this would follow directly, since every holomorphic bijection between complex manifolds is biholomorphic. \textit{A priori} $QM$ has singularities; let $U,Z\subseteq QM$ be the smooth locus and singular locus, respectively.
Then $\iota_{|U}$ is a holomorphic bijection of complex manifolds and thus a biholomorphism. Set $U'= \iota(U), Z'=\iota(Z)$. The proper mapping theorem implies that $Z'$ is a proper analytic subvariety of $T^*M$  and $\iota^{-1}_{|U'}$ is holomorphic. Thus by Riemann's removable singularity theorem the inverse $\iota^{-1}$ is holomorphic on $T^*M$.
Note also that $\iota^{-1}$ is homogenous, since $\iota$ is.
It follows that fiberwise $\iota^{-1}_{X}: T^*M_X\to {(\QTgn)}_{X}$ is homogenous and differentiable at the origin and therefore linear.
\end{proof}

The geodesic bundle $QM$ is naturally a complex Finsler vector bundle by restricting the area norm  from $\QTgn.$
\begin{corollary} 
The normed  vector bundle $(QM,\nm[\QM]{\cdot})$ is linearly isometric to the cotangent bundle $(T^*M,\nm[T^*M]{\cdot})$ with the quotient norm
and furthermore provides a splitting 
\[
\QM \oplus \nb^*M = {\QTgn}_{|M}
\]
as normed bundles. Here $\nb^*M$ is the conormal bundle of $M$, i.e., the dual of the normal bundle, endowed with the restriction of the area norm on $\QTgn$.
\end{corollary}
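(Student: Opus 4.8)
The plan is to read off both assertions from the proof of \Cref{proper-lin-bundle}, where essentially all the analytic work has already been done. The relevant map there is
\[
\iota:\QM \to T^*M,
\]
the composition of the inclusion $\QM\hookrightarrow {\QTgn}_{|M}$ with the restriction map $\pi:{\QTgn}_{|M}\to T^*M$ dual to the inclusion $i:TM\hookrightarrow T\Tgn$. The proposition established that $\iota$ is a fiberwise $\CC$-linear, norm-preserving biholomorphism, where $T^*M$ carries the quotient norm $\nm[T^*M]{\cdot}$. This is exactly the first assertion: $\iota$ is the desired linear isometry $(\QM,\nm[\QM]{\cdot})\to(T^*M,\nm[T^*M]{\cdot})$, so nothing new needs to be proved for the isometry claim.

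For the splitting, I would first identify the kernel $V=\ker\pi$ of the restriction map with the conormal bundle. Since $\pi$ is dual to $i:TM\hookrightarrow T\Tgn$, its kernel is the annihilator of $TM$ inside ${\QTgn}_{|M}$, which is by definition the conormal bundle $\nb^*M=\ANN(TM)$. As $M$ is a complex submanifold, $\nb^*M$ is a holomorphic subbundle, and we obtain a short exact sequence of holomorphic bundles
\[
0\to \nb^*M \to {\QTgn}_{|M}\xrightarrow{\ \pi\ } T^*M \to 0.
\]
Next I would observe that $\QM$ splits this sequence. Fiberwise over $X\in M$ the map $\iota_X=\pi_X|_{\QM_X}$ is an isomorphism onto $T^*_XM$, so $\QM_X\cap \nb^*M_X=\ker\iota_X=0$; comparing dimensions, $\dim \QM_X=d=\dim T^*_XM$ while $\dim\nb^*M_X=\dim Q(X)-d$, which forces $\QM_X\oplus\nb^*M_X=Q(X)$. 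Since both $\QM$ and $\nb^*M$ are holomorphic subbundles of ${\QTgn}_{|M}$, this yields the direct sum decomposition $\QM\oplus\nb^*M={\QTgn}_{|M}$ as holomorphic bundles. Endowing each summand with the restriction of the area norm makes this a splitting of normed bundles, and the extra content is precisely that on the summand $\QM$ the restricted area norm agrees, via $\iota$, with the quotient norm on $T^*M$, which is the isometry statement above.

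I do not expect a serious obstacle, as \Cref{proper-lin-bundle} already carries the full analytic weight; the only points requiring care are the identification $V=\nb^*M$ via the annihilator description and the fiberwise dimension count that upgrades ``$\iota$ an isomorphism'' to a genuine direct-sum complement. I would also remark that ``splitting as normed bundles'' here means only that $\QM$ and $\nb^*M$ are complementary holomorphic subbundles, each carrying the restricted area norm; since the area norm is not induced by an inner product, the total norm does not decompose as any orthogonal-type combination of the two summand norms, and no such statement is being claimed.
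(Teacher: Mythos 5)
Your proposal is correct and follows essentially the same route as the paper: the isometry is exactly the map $\iota:\QM\to T^*M$ from \Cref{proper-lin-bundle}, and the splitting comes from the identification $\nb^*M=\ker({\QTgn}_{|M}\to T^*M)$ together with the triviality of $\QM\cap\nb^*M$. The paper's proof is just a terser version of yours, leaving the conormal-bundle identification and the rank count implicit, so your added details (and the closing remark on what ``splitting as normed bundles'' does and does not assert) are faithful elaborations rather than a different argument.
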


\begin{proof}
The isomorphism between $QM$ and $T^*M$ is given by the composition $\iota:QM\to{\QTgn}_{|M}\to T^*M$.
For the second claim we  need to show that $QM$ and $\nb^*M$ intersect only in the zero section. This follows from the description $\nb^*M= \ker({\QTgn}_{|M}\to T^*M)$.
\end{proof}

\begin{remark}[Bilinearity of totally geodesic submanifolds]
A surprising consequence of \Cref{proper-lin-bundle} is that the geodesic bundle $\QM$ of a totally geodesic submanifold is linear in two different ways.
On one hand the fibers of the projection $\QM\to M$ are linear subspaces of the cotangent bundle and on the other hand the intersection of $\QM$ with a stratum coincides with a linear subspace in period coordinates of the stratum.
It seems interesting to investigate whether this ``bilinearity'' characterizes totally geodesic submanifolds.
\end{remark}

\section{The geodesic umkehr map}
\label{section-umkehr}
Let $f:M\to N$ be a holomorphic isometric embedding of totally geodesic submanifolds of \TM spaces. The derivative of $f$ induces a map on tangent bundles $f_*:TM\to f^*TN$ and dually also on cotangent bundles $f^*:f^*T^*N\to T^*M$.
By identifying the cotangent bundle with the geodesic bundle we obtain a map $f^*:f^*QN\to QM$.
\begin{definition}
    
The  {\em geodesic umkehr (backwards) map} 
\[
f_!:QM\to f^*QN
\]
is defined 
as follows. Let $(X,q)\in QM$ be a quadratic differential of area  $1$. Then $q$ defines a \TM geodesic $g$ through $X$ with tangent vector $\muqu$. Since $f$ is an isometric embedding $f(g)$ is a geodesic through $f(X)$ with tangent vector $f_*\muqu=\muqu[q']$ for a unique quadratic differential $q'$ of unit area. Define \[
f_!(X,q)= (f(X),q').
\]Extend $f_!$ to a homogeneous function on all of $QM$ by defining
\[
f_!(X,q) = ||q||_{QM} f_!(X,\muqu).
\]
\end{definition}

\begin{remark}
    The umkehr map is most natural in the  case of a covering construction \[f:\calT(\Sigma)\to\calT(\Sigma')\]  arising from a branched covering \[h:\Sigma'\to \Sigma\] of degree $d$.
    For $X\in \calT(\Sigma), Y=f(X)\in\calT(\tops')$, the dual of the derivative of $f$ induces a pullback $f^*:Q(Y)\to Q(X)$, which is given by the trace map \[\operatorname{tr_h}:Q(Y)\to Q(X).\]
    Recall that the trace map is defined by 
    \[
    \tr_h(q)(x) = \sum_{y\in h^{-1}(x)} q(y),
    \]
    if $x\in X$ is not a branched point and extended continously to branch points.  The umkehr map $f_!:Q(\tops)\to Q(\tops')$ on the other hand is given by pullback of differentials $f_!=\dfrac{1}{d}h^*$.
    Note that  in particular $f_!$ is linear, isometric for the area norm on quadratic differentials and a right inverse to $f^*$. One of our main technical theorems is that the same conclusions hold for any isometric embedding of totally geodesic submanifolds of \TM spaces.

\end{remark}

Recall that there is a homeomorphism \[
\begin{split} \psi_M:QM&\to TM, \\
(X,q)&\mapsto (X,\mu_q),
\end{split}
\]
where
\[
\mu_q =\begin{cases} \muq & \text{ if $q\neq 0$},\\
0 & \text{ otherwise. }
\end{cases}
\]
The map $\psi_N:QN\to TN$ is defined analogously. Both maps fit with $f_!$ into the following commutative diagram
\begin{equation}
    \label{diag-umkehr}
\begin{tikzcd} 
QM\ar[d,"\psi_M",swap]\ar[r,"f_!"] & f^*QN\ar[d,"f^*\psi_N"]\\
TM\ar[r,"f_*",swap] & f^*TN
\end{tikzcd}
\end{equation}

It follows from the above diagram  that 
\[f_! = (f^*\psi_N)^{-1}\circ f_*\circ\psi_M
\] is a composition of continuous maps and thus continuous. The main result of this section is that $f_!$ is indeed much better behaved.

\begin{proposition}[Regularity of $f_!$]\label{linearity_umkehr}
Let $M\subset\Tgn,  \, N\subseteq \Tgn[g',n']$ totally geodesic submanifolds and let $f:M\to N$ be a holomorphic isometric embedding.
Then the umkehr map \[f_!:QM\to f^*QN\] has the following properties.
\begin{enumerate}
    \item The map $f_!$ is a holomorphic, $\CC$-linear embedding of vector bundles,
    \item an isometry for the area norms on $QM$ and $QN$ and 
    \item a right inverse of $f^*:f^*QN\to QM$.
\end{enumerate}

Furthermore the formation of $f_!$ is functorial for isometric embeddings of totally geodesic submanifolds.
\end{proposition}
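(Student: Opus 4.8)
The plan is to establish the three asserted properties in increasing order of difficulty, where I treat the $\CC$-linearity in item (1) as the genuine obstacle and deduce everything else from the construction together with \Cref{proper-lin-bundle}. First I would record the elementary features of $f_!$. By construction $f_!$ carries the unit sphere of $QM$ onto the unit sphere of $f^*QN$ and is then extended positively homogeneously, so item (2), the isometry statement, is immediate. To upgrade positive homogeneity to $\CC$-homogeneity I would use that $f$ is holomorphic, hence $f_*$ is $\CC$-linear: replacing $q$ by $e^{i\theta}q$ multiplies the geodesic direction $\muqu$ by $e^{-i\theta}$, and since $f_*\bigl(e^{-i\theta}\muqu\bigr)=e^{-i\theta}\muqu[q']=\muqu[e^{i\theta}q']$, the output $q'$ is multiplied by $e^{i\theta}$. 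Thus $f_!(\lambda q)=\lambda f_!(q)$ for all $\lambda\in\CC$.

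Next I would prove item (3), $f^*\circ f_!=\id$, which simultaneously yields injectivity of $f_!$. Fix a unit-area $q\in QM$ and set $q'=f_!(X,q)$, so that $f_*\muqu=\muqu[q']$. Recall that $\muqu\in T_XM$, that $\langle\muqu,q\rangle=\int_X|q|=1$, and likewise $\langle\muqu[q'],q'\rangle=1$. Since $f^*$ is dual to $f_*$,
\[
\langle\muqu,f^*(q')\rangle=\langle f_*\muqu,q'\rangle=\langle\muqu[q'],q'\rangle=1.
\]
Because $f$ is an isometric embedding, $f_*$ preserves the infinitesimal \TM norm, so its dual $f^*$ is an isometric submersion and $\nm[QM]{f^*(q')}\le\nm[QN]{q'}=1$. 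Combined with $\nm[TM]{\muqu}=1$ and the duality between $\nm[TM]{\cdot}$ and $\nm[QM]{\cdot}$, the inequality $1=|\langle\muqu,f^*(q')\rangle|\le\nm[TM]{\muqu}\,\nm[QM]{f^*(q')}\le 1$ forces $\nm[QM]{f^*(q')}=1$ with equality in the pairing. By strict convexity of the area norm there is a unique unit-area differential paired to $1$ with $\muqu$, namely $q$ itself; hence $f^*(q')=q$, and $\CC$-homogeneity extends this to all of $QM$.

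The heart of the proof is item (1), and I expect additivity to be the main obstacle: \textit{a priori} the image of $f_!$ is only the cone of differentials whose geodesic direction lies in $f_*(TM)$, a nonlinear condition. The key observation that removes this obstacle is that this cone is exactly a geodesic bundle. Indeed $f(M)\subseteq N$ is a totally geodesic submanifold, and a unit $q'$ lies in the image of $f_!$ precisely when $\muqu[q']\in f_*(T_XM)=T_{f(X)}f(M)$, i.e.\ precisely when $q'$ generates a geodesic contained in $f(M)$; therefore the image of $f_!$ is the pullback $f^*Q(f(M))$ of the geodesic bundle of $f(M)$. Applying \Cref{proper-lin-bundle} to the totally geodesic submanifold $f(M)$ shows that $Q(f(M))$, and hence $f^*Q(f(M))$, is a holomorphic linear subbundle of $f^*QN$ of rank $\dim M$. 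Now $\pi:=f^*|_{f^*Q(f(M))}\colon f^*Q(f(M))\to QM$ is a holomorphic, $\CC$-linear bundle map which by item (3) satisfies $\pi\circ f_!=\id$; since $f_!$ is a continuous bijection onto $f^*Q(f(M))$, the map $\pi$ is its two-sided inverse. As a $\CC$-linear bijection has $\CC$-linear inverse, $f_!=\pi^{-1}$ is $\CC$-linear, and being the fiberwise inverse of a holomorphic bundle isomorphism it is holomorphic; injectivity together with the subbundle image shows $f_!$ is a vector-bundle embedding.

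Finally, I would verify functoriality directly from the construction: for composable holomorphic isometric embeddings $f$ and $g$ and a unit $q$, the chain rule $(g\circ f)_*=g_*\circ f_*$ applied to $\muqu$ gives $(g\circ f)_*\muqu=\muqu[q'']$ with $g_!(f_!(q))=q''$, so $(g\circ f)_!(q)=g_!(f_!(q))$ on unit spheres, and $\CC$-homogeneity propagates the identity to all of $QM$. The decisive step is the linearity in item (1): once the image of $f_!$ is recognized as the geodesic bundle of $f(M)$, \Cref{proper-lin-bundle} converts its nonlinear defining condition into linearity for free, which is why I would organize the argument around that identification rather than attempting to prove additivity by hand.
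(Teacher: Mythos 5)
Your proposal is correct and follows essentially the same route as the paper: the identity $f^*\circ f_!=\id$ is established by the same duality-pairing, isometric-submersion bound, and strict-convexity uniqueness argument, and the $\CC$-linearity and holomorphicity of $f_!$ are then inherited from the linear map $f^*$ once the two maps are shown to be mutually inverse, with \Cref{proper-lin-bundle} (applied to the totally geodesic submanifold $f(M)$) supplying the crucial holomorphic subbundle structure exactly as in the paper. The only minor divergence is the mechanism for bijectivity: you identify the image of $f_!$ with $f^*Q(f(M))$ directly via the homeomorphisms $\psi_M,\psi_N$ and $f_*T_XM=T_{f(X)}f(M)$, whereas the paper deduces it from invariance of domain together with the dimension count $\dim QN=2\dim N=2\dim M=\dim QM$ coming from \cref{eq:emap}; both routes are valid and rest on the same inputs.
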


\begin{proof} It suffices to deal with the case $N=f(M)$. Note that $f_!QM=QN$ and $f^*:f^*QN\to QM$ is holomorphic.
    We will first show that $ f^*\circ f_!=\id$, i.e., $f_!$ is a right inverse of $f^*$.

Recall that the pairing between Beltrami forms in $TM$ and quadratic differentials in $QM$ is given by $(\mu,q)= \int_X \mu q$.
Also recall that every Beltrami form  can be represented uniquely in its equivalence class as $\mu=||\mu||_{\infty}\dfrac{\bar{\phi}}{|\phi|}$.
Let $(X,q)\in QM$ and $k=\nm[QM]{q}$.
Since the \TM metric is dual to the area norm on quadratic differentials we have
\[
k=\nm[TM]{k\dfrac{\bar{q}}{|q|}} =k \sup_{\nm[]{\phi}=1} \left\lvert \int_X \dfrac{\bar{q}}{|q|}\phi \right\rvert 
\]
and as the norm $\nm[TM]{\cdot}$ is strictly convex, there is a unique quadratic differential $\phi$ of norm $1$
such that 
\[
k=\nm[TM]{k\dfrac{\bar{q}}{|q|}} =   \int_X k\muqu\phi.
\]
In fact $\phi=\dfrac{q}{\nm[QM]{q}}$, since 
\[
\int_X k \muqu\cdot\dfrac{q}{\nm[QM]{q}} = \dfrac{k}{\nm[QM]{q}} \int_X |q| = k
\]

First observe that $f^*f_!q\neq 0$ unless $q=0$. This follows from 
\[
\int_X \dfrac{\bar{q}}{|q|}f^*f_!q = \int_X f_*\left( \muqu \right)f_!q = \int_X   \muqu[f_!q]f_!q = \nm[QN]{f_!q} = \nm[QM]{q}. 
\]

Furthermore  \[
\nm[QM]{f^*f_!q}= \inf_{\{q':f^*q'= f^*f_!q\}} \nm[QN]{q'}\leq \nm[QN]{f_!q}= \nm[QM]{q},\]
where the first equality follows since $f^*$ is an isometric submersion.

For $q\neq 0$ we now compute
\[
\begin{split}
\int_X \dfrac{\bar{q}}{|q|} \cdot\dfrac{f^*f_!q}{\nm[QM]{f^*f_!q}}&=\int_X f_*\left(\muqu\right)\cdot  \dfrac{f_!q}{\nm[QM]{f^*f_!q}} \\
&=  \int_X \muqu[f_!q] \cdot \dfrac{f_!q}{\nm[QM]{f^*f_!q}} \\
&= \dfrac{\nm[QN]{f_!q}}{\nm[QM]{f^*f_!q}}\geq 1.
\end{split}
\]
On the other hand \[
\left |\int_X \muqu \cdot\dfrac{f^*f_!q}{\nm[QM]{f^*f_!q}}\right| \leq ||\muqu||_{\infty} \int_X \dfrac{|f^*f_!q|}{\nm[QM]{f^*f_!q}} = 1
\]
we conclude  $f^*f_!q = q$.
This finishes the proof of $(1)$.

By definition $f_!:QM\to f^*QN$ is norm-preserving, hence injective and proper.
Additionally  \[\dim QN = 2\dim N = 2\dim M =  \dim \QM,\]
which follows from \cref{eq:emap}. Now both $QM$ and $QN$ are connected, complex manifolds of the same dimension, thus by invariance of domain  $f_!:QM\to f^*QN$ and $f^*:f^*QN \to QM$ are bijective and inverses of each other.
 Since $f^*$ is linear the same is true for $f_!$. 

Finally, the functoriality of $f_!$ follows from \cref{diag-umkehr} and the functoriality of $f_*$.

\end{proof}

\begin{remark}
Using \cref{diag-umkehr} we can define the umkehr map more generally for any holomorphic map $f:\Tgn\to\Tgn[g',n']$ and from the diagram it follows that $f_!$ is functorial for holomorphic maps.  In  case $f:M\to N$ is an isometric submersion of totally geodesic submanifolds we can also give a geometric description for $f_!$ as follows.
Since $f_*$ is an isometric submersion the pullback map
\[
f^*:f^*QN \to QM
\]
is an isometric immersion and we have the relation
\[
f_!f^*q = q.
\]
To see this note that it suffices to show 
\[
f_*\left(\muqu[f^*q]\right) =\muqu.
\]
Suppose that $\nm[QN]{q}=1$. Then
\[
\int f_*\left(\muqu[f^*q]\right) q = \nm[QM]{f^*q} =\nm[QN]{q}=1 
\]
and since $f_*$ is an isometric  submersion 
\[ \nm[TN]{f_*\muqu}\leq \nm[TM]{\muqu}=1.\]
We thus conclude $f_!f^*q=q.$

\end{remark}

\section{Isometric embeddings are geometric}
In this section, using ideas from Gekhtman and Greenfield~\cite[Section 3]{isometric-submersions}, we complete the proof of \Cref{theorem:geometric_isometries}.
The link between the linearity of the geodesic umkehr map $f_!$ and a covering construction is
provided by the following theorem of \cite[Theorem 1.3]{isometric-submersions}, which relives heavily on ideas coming from a paper of Markovic \cite{markovic}.

\begin{theorem}[{\cite[Theorem 1.3]{isometric-submersions}}]\label{theorem:isometry_to_covering}
    Let $X$ and $Y$ be compact marked Riemann surfaces.  Assume $2g(X) + n(X) \geq 5$, where $n(X)$ is the number of marked points. Let $T:Q(X) \to Q(Y)$ be a $\CC$-linear isometric embedding. Then there is a holomorphic map $h:Y \to X$, and a constant $c \in \CC, |c|=1$ such that $T=c\cdot \dfrac{h^*}{\deg(h)}$. 
\end{theorem}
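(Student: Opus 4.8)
The plan is to reconstruct the local differential geometry of the area norm on $Q(X)$ and show that a $\CC$-linear isometric embedding is forced to preserve enough of it to come from a holomorphic map. The starting point is the first-variation formula already recorded earlier: for $q\neq 0$ with $\|q\|=1$ and $v\in Q(X)$,
\[
\frac{d}{dt}\Big|_{t=0}\int_X |q+tv| = \re\int_X v\,\frac{\bar q}{|q|},
\]
so the norm is $C^1$ away from the origin. The decisive object is the second variation. First I would compute, away from the zeros of $q$ where the integrand is smooth in $t$,
\[
\frac{d^2}{dt^2}\Big|_{t=0}\int_X |q+tv| = \int_X \frac{(\operatorname{Im}(\bar q\, v))^2}{|q|^3}=:B_q(v),
\]
and analyze its convergence near the zeros of $q$. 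In a local holomorphic coordinate with $q= z^k\,(dz)^2$ and $v(p)\neq 0$, the integrand behaves like $|z|^{-k}$, so the two-dimensional integral converges precisely when $k=1$. Hence $B_q(v)$ is finite for \emph{all} $v$ if and only if $q$ has only simple zeros, and when $q$ has a zero of higher order at $p$ the finiteness subspace $\{v:B_q(v)<\infty\}$ is exactly the set of differentials vanishing at $p$ (to an order dictated by the order of the zero). Thus the purely norm-theoretic quantity $B_q$ reads off the non-reduced part of the zero divisor of $q$ together with the locations of its multiple zeros.

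Next I would exploit linearity and isometry. Because $T$ is $\CC$-linear and norm-preserving, $T(q+tv)=Tq+t\,Tv$ and
\[
\|q+tv\| = \|Tq + t\,Tv\| \quad\text{for all } t,
\]
so all $t$-derivatives agree and in particular $B_q(v)=B_{Tq}(Tv)$. Consequently $T$ maps simple-zero differentials to simple-zero differentials, and it carries the finiteness subspace of $B_q$ to that of $B_{Tq}$; comparing the two recovers a correspondence between the multiple zeros of $q$ on $X$ and those of $Tq$ on $Y$. Letting $q$ vary and tracking how the zeros of $Tq$ on $Y$ depend on the zeros of $q$ on $X$ produces a well-defined assignment sending each zero $y$ of $Tq$ to the point of $X$ it lies over; I would show this assignment extends to a finite-to-one map $h:Y\to X$ whose degree is the common fiber cardinality. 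The finer jet data encoded in $B_q$ (and, if needed, in higher-order derivatives of the norm) forces the induced identification of local holomorphic coordinates, so $h$ is holomorphic rather than merely continuous. This is where the hypothesis $2g(X)+n(X)\geq 5$ enters: it guarantees that $Q(X)$ has large enough dimension that differentials with prescribed simple zeros span enough directions to separate points and pin down the correspondence, and it excludes the low-complexity surfaces whose hyperelliptic involution acts as $-1$ on $Q(X)$ and would otherwise generate isometries not of the claimed form.

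Finally I would compare $T$ with the normalized pullback. Since $\int_Y |h^*q| = \deg(h)\int_X |q|$, the map $h^*/\deg(h):Q(X)\to Q(Y)$ is itself a $\CC$-linear isometric embedding, and by construction it induces the same divisor correspondence as $T$. Therefore $T$ and $h^*/\deg(h)$ agree after composition with the linear isometry that is the identity on divisors; a direct argument shows that a $\CC$-linear isometry fixing every zero divisor must be multiplication by a single unimodular constant $c$, giving $T=c\cdot h^*/\deg(h)$.

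The main obstacle is the middle step: organizing the pointwise correspondence extracted from the finiteness loci of $B_q$ into an honest holomorphic branched covering and proving its holomorphy. The analytic input of the first step—making precise and coordinate-invariant the claim that the non-$C^2$ locus of the area norm recovers the divisor and the germ of a local holomorphic coordinate—is the technical heart of Royden's and Markovic's arguments, and the construction and regularity of $h$ rest entirely on extracting exactly the right germ of data from the singularity of $\int_X |q+tv|$ at the zeros of $q$.
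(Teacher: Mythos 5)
First, note that the paper does not prove this statement at all: it is quoted from Gekhtman--Greenfield \cite[Thm.\ 1.3]{isometric-submersions} (building on Markovic \cite{markovic}), and the remark following the theorem records how that proof goes --- dualize to $T^*:Q(Y)^*\to Q(X)^*$ and show that the projectivized dual maps the bicanonically embedded curve $\Psi_Y(Y)\subset\PQ[Y]$ into $\Psi_X(X)\subset\PQ[X]$, so that $h=\Psi_X^{-1}\circ T^*\circ\Psi_Y$ is holomorphic \emph{for free} because $T^*$ is linear. Your first step is sound and is in fact Royden's original device: since $t\mapsto\norm{q+tv}$ is convex, the symmetric second difference quotient decreases pointwise as $t\downarrow 0$ and one gets $B_q(v)=\lim_{t\to 0}t^{-2}\bigl(\norm{q+tv}+\norm{q-tv}-2\norm{q}\bigr)\in[0,\infty]$, a purely norm-theoretic quantity, whence $B_q(v)=B_{Tq}(Tv)$; the local exponent count at a zero of order $k$ is also correct. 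This machinery does prove the \emph{surjective} (isometry) case.

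The genuine gap is in the middle step, and one of your concrete claims there is false: $T$ does \emph{not} map simple-zero differentials to simple-zero differentials in exactly the cases the theorem describes. If $T=h^*/\deg(h)$ with $h$ ramified, a generic $q$ with only simple zeros pulls back to $h^*q$ with a zero of order $2e-2\geq 2$ at every ramification point of index $e\geq 2$. This is consistent with $B_q(v)=B_{Tq}(Tv)$ because every element of the image $T(Q(X))$ vanishes to comparable order at those points, so the integrand stays integrable there: the invariant you control is only the finiteness locus of $B_{Tq}$ \emph{restricted to} $\operatorname{im}(T)$, which sees relative vanishing conditions inside the image, not the zero divisor of $Tq$ in $Q(Y)$. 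Consequently the proposed correspondence between multiple zeros of $q$ and multiple zeros of $Tq$ is not well defined, and the steps you defer --- that every $y\in Y$ is reached, that the assignment is single-valued and consistent as $q$ varies, and that $h$ is holomorphic --- are precisely the content of the theorem, not a technicality; your ``finer jet data forces holomorphy'' has no argument behind it. The published proof avoids all zero-tracking by working with point-evaluation functionals (the bicanonical image in the dual sphere), characterized via the sup-norm geometry of $Q(Y)^*$, and gets regularity of $h$ from linearity of $T^*$. Finally, the role of $2g(X)+n(X)\geq 5$ is very-ampleness of the bicanonical system with simple poles at the marked points (so $\Psi_X$ is an embedding), not your spanning heuristic; and the hyperelliptic involution acts as $+1$, not $-1$, on $Q(X)$ --- the genuine low-complexity failure is that $\Psi_X$ factors through the hyperelliptic quotient, so the extracted correspondence lands on the quotient rather than on $X$.
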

\begin{remark}
In fact, the proof of \Cref{theorem:isometry_to_covering} gives an explicit description of the map $h$ as follows. Let $\Psi_X:X \to \PQ[X]$ and 
$\Psi_Y:Y \to \PQ[Y]$ be the bicanonical embeddings for $X$ and $Y$.

Then $h$ fits into the diagram:
\begin{center}
    \begin{tikzcd}
    \PQ[Y]  \ar[r, "T^*"] & \PQ[X] \\
    Y \ar[u, "\Psi_Y"] \ar[r, "h"] & X \ar[u,"\Psi_X"']
    \end{tikzcd}    
\end{center}
We emphasize here that $Q(X)$ denotes the space of meromorphic quadratic differentials with at most simple poles at the marked points of $X$, so that $\Psi_X$ might differ from the usual bicanonical embedding in algebraic geometry. The condition $2g(X)+n(X)\geq 5$ guarantees that $\Psi_X$ is an embedding.
\end{remark}

We now apply \Cref{theorem:isometry_to_covering} in the situation where $f:\Tgn\to\Tgnp$ is a holomorphic isometric embedding.
Let $\Cgn[g,n]$ and $\Cgn[g',n']$ be the universal curves over $\Tgn$ and $\Tgn[g',n']$. Applying \Cref{theorem:isometry_to_covering} fiberwise to the umkehr map $f_!: \QTgn \to f^*\QTgn[g',n']$, gives a map 
$H:f^*\Cgn[g',n'] \to \Cgn$, which fits into the commutative diagram,
\begin{center}
\begin{tikzcd}
f^*\Cgn[g',n'] \ar[d] \ar[r, "H"] & \Cgn  \ar[d] \\
f(\Tgn)& \ar[l, "f"'] \Tgn  
\end{tikzcd}
\end{center}
As we will see below, the regularity of $H$ plays a key role in the proof of \Cref{theorem:geometric_isometries}. The following shows that $H$ is holomorphic. 
\begin{lemma}\label{lemma-Hhol}Suppose $2g+n\geq 5$. Then the map $H:f^*\Cgn[g',n'] \to \Cgn$ is holomorphic.
\end{lemma}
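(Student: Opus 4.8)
The plan is to establish holomorphicity of $H$ by transferring it to holomorphicity of the associated projectivized map and invoking the explicit description of $h$ via the bicanonical embeddings. The key structural fact, from the Remark following \Cref{theorem:isometry_to_covering}, is that fiberwise the map $H_X:Y\to X$ (with $Y$ the fiber over $f(X)$) sits in a square whose horizontal arrows are $h=H_X$ on the bottom and the projective-linear map $(f_!)^*$ on top, with the bicanonical embeddings as vertical arrows. Since we have already shown in \Cref{linearity_umkehr} that $f_!$ is a holomorphic, $\CC$-linear bundle map, its fiberwise dual $f_!^*$ varies holomorphically as well, and hence the induced map on the projective bundles $\mathbb{P}Q^*$ is holomorphic in all variables (base and fiber together).

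**Reduction to the bicanonical picture.** First I would assemble the relative bicanonical embedding. Over $\Tgn$ there is a universal curve $\Cgn$, and the relative space of quadratic differentials $\QTgn$ defines a relative bicanonical map $\Psi:\Cgn\hookrightarrow \mathbb{P}Q\calT_{g,n}^*$, fiberwise given by $\Psi_X:X\hookrightarrow \PQ[X]$; the hypothesis $2g+n\geq 5$ guarantees this is a fiberwise embedding (this is exactly the condition in the Remark). The analogous construction over $f(\Tgn)$ gives $\Psi':f^*\Cgn[g',n']\hookrightarrow f^*\mathbb{P}Q\calT_{g',n'}^*$. The map $f_!$ induces, by fiberwise dualization and projectivization, a holomorphic bundle map $\mathbb{P}(f_!^*):f^*\mathbb{P}Q\calT_{g',n'}^*\to \mathbb{P}Q\calT_{g,n}^*$. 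The fiberwise diagram in the Remark then says precisely that $\Psi\circ H=\mathbb{P}(f_!^*)\circ\Psi'$ as maps $f^*\Cgn[g',n']\to\mathbb{P}Q\calT_{g,n}^*$.

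**Extracting holomorphicity of $H$.** Since $\Psi$ is a fiberwise holomorphic embedding whose image is the relative bicanonical curve, and since $\mathbb{P}(f_!^*)\circ\Psi'$ is a composition of holomorphic maps, the composite $\Psi\circ H$ is holomorphic. To deduce that $H$ itself is holomorphic I would invoke that $\Psi$ is a (proper) holomorphic embedding onto a complex submanifold $\Psi(\Cgn)$, so it admits a holomorphic left inverse on a neighborhood of its image, or more directly: $H=\Psi^{-1}\circ(\mathbb{P}(f_!^*)\circ\Psi')$, where $\Psi^{-1}$ is holomorphic on the image of the embedding. One must check the image of $\mathbb{P}(f_!^*)\circ\Psi'$ genuinely lands in $\Psi(\Cgn)$, which is guaranteed by the fiberwise statement of \Cref{theorem:isometry_to_covering}, so the composition is well-defined and holomorphic. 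Continuity of $H$ (already available from the continuity of $f_!$ and $E$) ensures there are no branch-selection ambiguities in applying the local inverse.

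**Main obstacle.** The delicate point is not the fiberwise statement—that is handed to us by \Cref{theorem:isometry_to_covering}—but the \emph{holomorphic dependence on the base parameter} $X\in\Tgn$. The explicit diagram is pointwise; what makes it vary holomorphically is that all the ingredients ($\Psi$, $\Psi'$, and crucially $f_!$) are holomorphic as maps of bundles over $\Tgn$, and the bicanonical embedding condition $2g+n\geq 5$ holds uniformly. Thus the real content of the lemma is repackaging the fiberwise algebraic construction of $h$ into a relative statement and checking that $\Psi$ being a relative embedding lets one invert it holomorphically in families; here the hypothesis $2g+n\geq 5$ enters essentially, since without it $\Psi$ would fail to be an embedding and $H$ could not be recovered from the projective data.
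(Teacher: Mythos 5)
Your proposal is correct and follows essentially the same route as the paper: the paper's proof is exactly the identity $H=\Psi^{-1}\circ f_!^{*}\circ\Phi$, where $\Psi$ and $\Phi$ are the relative bicanonical embeddings (biholomorphic onto their images, using $2g+n\geq 5$) and $f_!^{*}$ is holomorphic by \Cref{linearity_umkehr}. Your additional checks—that the image of $\mathbb{P}(f_!^{*})\circ\Phi$ lands in $\Psi(\Cgn)$ via the fiberwise statement of \Cref{theorem:isometry_to_covering}, and that $\Psi$ inverts holomorphically in families—are exactly the points the paper leaves implicit.
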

\begin{proof}
Consider the following commutative diagram,
\begin{center}
    \begin{tikzcd}
       \Cgn \ar[r,"\Psi"] \ar[rd] & \PQgn \ar[d] & \ar[l,"f_!^*"'] f^*\PQgn[g',n'] \ar[d] & \ar[l, "\Phi"'] \ar[ld] f^*\Cgn[g',n']\\
       &\Tgn \ar[r, "f"] & f(\Tgn) 
    \end{tikzcd}
\end{center}
The maps $\Psi$ and $\Phi$ are the respective fiberwise bicanonical embeddings, which are biholomorphic  onto their image. Since $H$ is given by \[ H = \Psi^{-1} \circ f_!^* \circ \Phi, \] the claim follows.
\end{proof}

\begin{proof}[Finishing the proof of \Cref{theorem:geometric_isometries}]

For the rest of the proof we identify $\Tgn=\calT(\Sigma,u)$ with its image $f(\Tgn) \subset \Tgnp=\calT(\Sigma',v)$. By \Cref{lemma-Hhol}, there exists a holomorphic family of branched coverings 
\[
\begin{tikzcd}
f^*\Cgn[g',n'] \ar[d,"H"] \\
\Cgn \ar[d]\\
\Tgn.
\end{tikzcd}
\]
For a marked surface $(X,u,\phi:\Sigma\to X)\in \Tgn=\calT(\Sigma,u)$ we let  $h_X: f(X) \to X$ be map induced by restricting $H$ to a fiber.
The ramification multiplicity $(X,x) \to \operatorname{mult}_x h_X$ is an upper semicontinuous function on $f^*\Cgn$.
Thus there exists a dense open set $\calU \subset\Tgn$ where the \emph{ramification profile}, i.e., the number of  ramification points and ramification multiplicities, of $h_X$ is constant. Moreover the ramification points neither collide nor split along paths in $\calU$.

For each $(X,\phi,u)$, let $z(X)$ be the union of $u$ and all branch points of $h_X$. Similarly, let $z'(X)$ be the union  of all marked points $v$ on $f(X)$ and all ramification points.
By removing $z$ and $z'$ and varying $X$ we obtain  a holomorphic family of covering maps 
\[
f(X) - z'(X) \to X - z(X).
\]

By passing to the universal cover $\widehat{U}$ of $U$
we obtain marked families and furthermore  the family of coverings can be (smoothly) trivialized by covering space theory. 
Summarizing, there is a diagram (\emph{a priori} not commutative)
\[
\begin{tikzcd}
    \widehat{U} \ar[r]\ar[d] & \calT(\tops,z)\ar[r,"g"] \ar[d,"\pi"]&\calT(\tops',z')\ar[d]\\
    U \ar[r]& \calT(\tops,u) \ar[r,"f",swap] & \calT(\tops',v).
\end{tikzcd}
\]
The first diagram is commutative and the second one is commutative at least over $\pi^{-1}(U)$, which is dense in $\calT(\tops,z)$. Since all maps involved are holomorphic we conclude that the whole diagram commutes.
By construction $g$ is a totally marked covering construction and thus $f$ is a covering construction by definition.

\end{proof}

It remains to address \Cref{cor-rigid}: local isometric embeddings of moduli spaces.

\begin{proof}[Proof of \Cref{cor-rigid}]
Since $\dim \Tgn\geq 2$, \Cref{theorem:geometric_isometries} and \Cref{cor:low-complexity} imply that the isometric embedding $f:\Tgn\to\Tgn[g',n']$ is a covering construction.
Let $\Mod(g,n)=\Mod(\Sigma,u)$ be the mapping class group of $n$-marked genus $g$ surfaces.
Given a branched covering $h: \Sigma'\to \Sigma$, let $\Mod_h\subseteq \Mod(g,n)$ be the finite-index subgroup of mapping classes that lift under the branched covering $h$.
The covering construction then descends to an algebraic map \[ \widetilde{f}: \widetilde{\calM}_{g,n}=\Tgn/ \Mod_h \to \Mgn[g',n']\]  of coarse moduli spaces. 
Note that there is not necessarily an induced group homomorphism $\Mod_h\to \Mod(g',n')$, thus we cannot guarantee a map between orbifolds. Since the map $f$ of \TM spaces is an isometric embedding, the induced map $\widetilde{f}$ is a local isometric embedding.

Conversely, let \[
\widetilde{\calM}_{g,n}= \Tgn/\Gamma, \,\widetilde{\calM}_{g',n'} = \Tgn[g',n']/\Gamma'
\]be finite orbifold covers of $\Mgn$ and $\Mgn[g,'n,']$, respectively, for some finite index subgroups \[
\Gamma\subseteq \Mod(g,n),\, \Gamma'\subseteq \Mod(g',n').\]
Let 
 \[
\widetilde{f}: \widetilde{\calM}_{g,n}\to \widetilde{\calM}_{g',n'}
 \]  be a local isometric embedding. In particular $\widetilde{f}$ is induced by a $\rho$-equivariant local isometric embedding of \TM spaces $f:\Tgn\to\Tgn[g',m']$, where $\rho:\Gamma\to\Gamma'$ is some group homomorphism. Since on $\Tgn$ the distance between two points is realized by a geodesic, $f$ is an isometric embedding and thus a covering construction by \Cref{theorem:geometric_isometries}.
Hence $f$ is obtained by pulling back complex structures along a branched covering $h:\Sigma\to \Sigma'$ of topological surfaces. 
Pre- and post composing with homeomorphisms isotopic to the identity changes the branched covering $h$ but not the induced map $f$ on \TM spaces. For fixed $(g,g',n,n')$ the mapping class groups  $\Mod(g,n)$ and $\Mod(g',n')$ act on
\[
\operatorname{Cov}:= \{ \phi:\Tgn\to\Tgn[g',n']\,|\, \phi \text{ is a  covering construction}\}
\]
by pre- and post-composition, respectively.
Both actions  by $\Mod(g,n)$ and $\Mod(g',n')$ leave the  ramification profile and the monodromy representation of $h$ invariant. 
If a covering construction $\phi$ descends to a map \[\widetilde{\phi}:\widetilde{\calM}_{g,n}\to \widetilde{\calM}_{g',n'},\]
then $\gamma'\phi\gamma$ for $\gamma\in\Gamma,\gamma'\in\Gamma'$ descends to the same map $\widetilde{\phi}$.
In particular there are at most 
\[
[\Mod(g',n'):\Gamma']\cdot  [\Mod(g,n):\Gamma]
\]
coverings constructions 
\[\widetilde{\phi}:\widetilde{\calM}_{g,n}\to \widetilde{\calM}_{g',n'},\]
with  a fixed ramification profile and fixed  monodromy representation.
Since there are only finitely many choices for both ramification and monodromy representation, there are only finitely many coverings constructions from $\widetilde{\calM}_{g,n}$ to $\widetilde{\calM}_{g',n'}$.

\end{proof}

\newpage
\appendix
\section{Covering constructions}\label{appendix}
In \Cref{def-covering}, covering constructions \[G:\calT(\Sigma,u)\to\calT(\Sigma',v)\] associated to a branched covering 
\[
h:\Sigma'\to\Sigma
\]
of topological surfaces were defined. One can always choose the branched points $u\subseteq \Sigma, v\subseteq \Sigma'$ such that $G$ is well-defined. For example if $u$ is the set of branch points of $h$ and $v=h^{-1}(u)$, then $G$ is well-defined and an isometric embedding.
But for other choices of $u$ and $v$ the map $G$ might not be well-defined or not an isometric embedding. In the rest of the appendix we discuss in detail when $G$ is well-defined and if so, when $G$ is an isometric embedding.

A covering construction  $G:\calT(\Sigma,u)\to\calT(\Sigma',v)$ is uniquely determined by the diagram \cref{eq-tm-covering}, i.e., by the data  of a {\em branching tuple} $(h,z,u,v)$ consisting of 
\begin{itemize}
    \item  a branched covering $h: \Sigma'\to \Sigma$, \item finite sets $u\subseteq z \subseteq \Sigma$ such that $z$ contains all branch points of $h$ and
    \item a finite set $v\subseteq z'=h^{-1}(z)\subseteq \Sigma'$.
    \end{itemize}

\begin{definition}
A branching tuple $(h,z,u,v)$ associated to a branched covering $h:\Sigma'\to\Sigma$ is {\em realizable} if there exists a holomorphic map $G$ fitting into a diagram 
\labelcref{eq-tm-covering} where $H$ is a totally marked covering construction.
Similarly, the branching tuple $(h,z,u,v)$ is {\em realizable by an isometric covering construction} if $G$ is an isometric embedding.
\end{definition}

We first address the case where $(h, z,u,v)$ is realizable and discuss  necessary and sufficient conditions for when the resulting covering construction $G$ is an isometric embedding.

\begin{proposition}\label{prop-isometric-covering}
Let $G: \calT(\tops,u)\to  \calT(\tops',v)$ be a covering construction induced by branched covering $h:\Sigma'  \to \Sigma$ and assume $\calT(\tops,u)\not\simeq \Tgn[1,1]$. Then $G$ is an isometric embedding if and only if $u$ contains all branch points of $h$ and  \begin{equation*}
    h^{-1}(u)\subseteq  v \cup R(h),
    \end{equation*}
    where $R(h)\subseteq \Sigma'$ is the set of ramification points of $h$.
   
\end{proposition}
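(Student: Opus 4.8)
The plan is to reduce the statement to a concrete condition on the cotangent map of $G$ and then to a local computation at the ramification points of $h$. Fix $X\in\calT(\Sigma,u)$ and write $h\colon Y\to X$ for the induced branched covering of Riemann surfaces, with $Y=G(X)\in\calT(\Sigma',v)$ and $d=\deg h$. Since $G$ is holomorphic, it is an isometric embedding if and only if its derivative is a fibrewise linear isometry, equivalently (dualizing, as in the preliminaries, where the dual of an isometric embedding is an isometric submersion) if and only if the cotangent map $G^{*}\colon Q(Y,v)\to Q(X,u)$ is a fibrewise isometric submersion. Unwinding the defining diagram \cref{eq-tm-covering} on cotangent spaces identifies $G^{*}$ with the restriction of the trace map $\tr_{h}\colon Q(Y,z')\to Q(X,z)$. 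I will organize the proof around the two requirements hidden in ``well-defined isometric submersion'': first, that $\tr_{h}$ actually lands in $Q(X,u)$, which I claim is condition $(a)$; and second, the isometric submersion property itself, which I claim is condition $(b)$.

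For the isometric property I would compare with the totally marked case. The trace $\tr_{h}\colon Q(Y,z')\to Q(X,z)$ is norm nonincreasing, and the umkehr map $\tfrac1d h^{*}$ is an isometric right inverse (as recorded for covering constructions after \Cref{linearity_umkehr}), which lands in $Q(Y,z')$ because $z'=h^{-1}(z)$. Since the area norm is strictly convex, each affine fibre $\{q:\tr_{h}q=p\}$ contains a \emph{unique} norm minimizer, namely $\tfrac1d h^{*}p$. From this I extract the key lemma: $\tr_{h}\colon Q(Y,v)\to Q(X,u)$ is an isometric submersion if and only if $\tfrac1d h^{*}\bigl(Q(X,u)\bigr)\subseteq Q(Y,v)$. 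Indeed, if the section lands in $Q(Y,v)$ it exhibits the unit ball of $Q(X,u)$ as the image of the unit ball; conversely, any norm-preserving preimage in $Q(Y,v)$ of a unit differential $p$ is also a minimizer in $Q(Y,z')$, hence equals $\tfrac1d h^{*}p$ by uniqueness, forcing the inclusion.

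It remains to translate the two inclusions into $(a)$ and $(b)$ by a local computation in a coordinate $z=w^{e}$ at a point $y\in Y$ of ramification index $e$ over $x=h(y)$. Pulling back $p=c\,z^{-1}dz^{2}+\cdots\in Q(X,u)$ yields leading term $c\,e^{2}w^{e-2}dw^{2}$, so $h^{*}p$ has a simple pole at $y$ precisely when $e=1$ and $p$ has a pole at $x$, and is holomorphic (vanishing once $e\ge 3$) at every ramified preimage. Hence $\tfrac1d h^{*}(Q(X,u))\subseteq Q(Y,v)$ exactly when every unramified preimage of a point of $u$ lies in $v$, i.e.\ $h^{-1}(u)\subseteq v\cup R(h)$, which is $(b)$. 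Dually, pushing forward a differential $q=\sum_{k\ge 0}a_{k}w^{k}\,dw^{2}$ holomorphic at $y$ produces the term $\tfrac1e a_{e-2}\,z^{-1}dz^{2}$, so $\tr_{h}$ creates a simple pole at a branch point whenever $e\ge 2$. Thus $\tr_{h}(Q(Y,v))\subseteq Q(X,u)$ forces every branch point into $u$, which is $(a)$. Combining, $G$ is an isometric embedding if and only if $(a)$ and $(b)$ hold.

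I expect the ramification computation — and in particular ensuring that the offending leading coefficients can be made nonzero by an admissible differential, so that the predicted poles genuinely occur — to be the main obstacle; this is exactly where the hypothesis $\calT(\Sigma,u)\not\simeq\Tgn[1,1]$ enters. For $(g,|u|)=(1,1)$ the one-dimensional space $Q(X,u)$ is spanned by the square of a nowhere-vanishing abelian differential and contains no element with a pole at the marked point, so the pole-realization arguments above collapse and the conditions need not be sharp. Away from this case the relevant spaces of quadratic differentials are large enough, by Riemann--Roch, to realize a nonzero coefficient $a_{e-2}$ and a genuine simple pole at any prescribed point, which makes the inclusions sharp and both $(a)$ and $(b)$ necessary. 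Finally, since $(a)$ and $(b)$ are topological conditions on the tuple $(h,u,v,R(h))$, independent of $X$, and the dimension counts are uniform in the topological type, it suffices to verify the fibrewise statement at a single point, completing the equivalence.
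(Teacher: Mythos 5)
Your reduction to the coderivative and your convexity lemma are correct, and for condition (b) you essentially recover the paper's argument by a different mechanism: the paper obtains the inclusion $\tfrac{1}{d}h_X^*(Q(X,u))\subseteq Q(X',v)$ from the functoriality of the umkehr map (\Cref{linearity_umkehr}) applied to the defining square, whereas you re-prove it by hand via the unique norm minimizer $\tfrac{1}{d}h^*p$ in the trace fibers (correct: the trace is norm nonincreasing, the section is norm preserving, and strict convexity gives uniqueness); both proofs then run the same local computation and use $(g,n)\neq(1,1)$ in the same way, namely to produce $q\in Q(X,u)$ with a genuine simple pole at a prescribed point of $u$. One step you assert without justification is the equivalence ``isometric embedding $\Leftrightarrow$ fibrewise infinitesimal isometry'': holomorphy gives the $1$-Lipschitz inequality for free, but the converse inequality needs that a holomorphic disk whose derivative attains the infinitesimal Teichm\"uller norm at a point is a Teichm\"uller disk (Teichm\"uller uniqueness); the paper's sufficiency proof is exactly the global version of this, lifting Teichm\"uller maps through the branched covering, and you should either cite the infinitesimal rigidity statement or argue as the paper does.

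The genuine gap is in your necessity argument for condition (a). You derive $b\subseteq u$ from the containment $\tr_h(Q(Y,v))\subseteq Q(X,u)$ by producing $q\in Q(Y,v)$ whose trace has a simple pole at a branch point $x\notin u$. Realizing a nonzero coefficient $a_{e-2}$ at a ramification point, without cancellation across the fiber, is a richness condition on the \emph{target} space $Q(Y,v)$, while the hypothesis $\calT(\tops,u)\not\simeq\Tgn[1,1]$ constrains only the \emph{source}; your closing paragraph conflates the two. Concretely, if $(g',|v|)=(1,1)$ then $Q(Y,v)=\CC\,\omega^2$ with $\omega$ a nowhere-vanishing abelian differential, so $a_{e-2}(y)=0$ at every ramification point of index $e\geq 3$ (e.g.\ the degree-$3$ cyclic cover of the sphere totally ramified over three points has $Y$ a torus), and no pole is created; moreover, when $x$ has several ramified preimages, the polar coefficient of the trace is the linear functional $q\mapsto\sum_i \tfrac{1}{e_i}a_{e_i-2}(y_i)$, and on a small space such as a one-dimensional $Q(Y,v)$ you cannot choose $q$ to prevent this functional from vanishing identically. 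The paper proves $b\subseteq u$ by an entirely different, global argument that avoids these degenerations: it extends the defining diagram by the forgetful map $\pi:\calT(\tops,u\cup b)\to\calT(\tops,u)$ and observes that if $b\not\subseteq u$ then the fibers of $\pi$ are positive dimensional, which is incompatible with the injectivity forced by the isometric embedding and the commutativity of the diagram. To repair your proof you should either adopt that argument for (a), or supply hypotheses on $(g',|v|)$ that your statement does not currently guarantee.
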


\begin{proof}
Suppose $G$ is an isometric embedding. 
In particular $G$ fits into a diagram 
\begin{equation} \label{eq-covering}
\begin{tikzcd}
    \calT(\tops,z)\ar[r,"H"]\ar[d,swap] & \calT(\tops',h^{-1}(z))\ar[d]\\
    \calT(\tops,u)\ar[r,"G",swap] & \calT(\tops',v) 
\end{tikzcd}
\end{equation}

Given $X\in \calT(\Sigma,z)$ and $X'=H(X)\in\calT(\Sigma',h^{-1}(z))$ we have induced holomorphic branched coverings
\[
h_X: X'\to X
\]
The geodesic  umkehr map $H_!$ is given by pullback of quadratic differentials \[
h_X^*:Q(X,z)\to Q(X',z').
\]
All four maps in the diagram \labelcref{eq-covering} are either isometric immersion or are submersion and thus by the functoriality of the umkehr map we see that $G_!$ is also given by pullback $h_X^*$.
In particular we obtain an induced map
\[
{(h_X^*)}_{|Q(X,u)}:Q(X,u)\to Q(X',v).
\]
Let $p\in u$ be a marked point. Since $(g,n)\neq(1,1)$ we can choose a quadratic differential $q$ on $X$ with a simple pole at $q$ (and potentially more simple poles). The pullback differential $h_X^*(q)$ has a simple pole at every preimage of $p$ unless the preimage is a ramification point.
Since simple poles have to be marked points we conclude that $v$ contains $h^{-1}(u) \setminus R(h)$.

Next we argue that $u$ contains all the branch points $b$. The diagram \labelcref{eq-covering} extends to a diagram
\[
\begin{tikzcd}
    \calT(\tops,z)\ar[r,"H"]\ar[d,swap] & \calT(\tops',h^{-1}(z))\ar[d]\\
    \calT(\tops,u \cup b)\ar[d,"\pi"]\ar[r, "K"] &  \calT(\tops',h^{-1}(u)\cup R(h))\ar[dr,"\pi'"]\\
    \calT(\tops,u)\ar[r,"G",swap] & \calT(\tops',v) \ar[r] & \calT(\tops',h^{-1}(u) - R(h)) 
\end{tikzcd}
\]

The composition $\pi'\circ K$ is realizable and an isometric embedding. In particular $\pi\circ K$ is injective. Since the diagram is commutative, this is only possible if $b\subseteq u$. Otherwise the fibers of $\pi$ would have positive dimension.
This shows the necessity of the condition
\[
b\subseteq u, \quad h^{-1}(u)\subseteq v \cup R(h). 
\]

To see sufficiency note that a \TM map lifts to the branched covering, as long as a quadratic differential with simple poles at $u$
pulls back to a quadratic differential with at most simple poles at $v$, which is guaranteed by $h^{-1}(u)\subseteq v \cup R(h)$.

\end{proof}

Now we address the same question, but without assuming $(h,z,u,v)$ is already realizable.

\begin{proposition}\label{prop-isometric-realizable}
Let $(\tops,u),(\tops',v)$ be topological surfaces with $g(\tops)\geq 1$ or $g(\tops')\geq 2$ and $(g(\tops),|u|)\neq(1,1)$. Furthermore, let $(h,z,u,v)$ be a branching tuple.
Then $(h,z,u,v)$ is realized by an isometric covering construction $G: \calT(\tops,u)\to  \calT(\tops',v)$ 
if and only if \begin{equation*}
    h^{-1}(u) = v \cup  R(h),
    \end{equation*}
    where $R(h)\subseteq \Sigma'$ is the set of ramification points of $h$.
\end{proposition}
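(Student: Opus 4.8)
The plan is to disentangle the two separate demands contained in the phrase ``realized by an isometric covering construction'': first that the tuple $(h,z,u,v)$ be \emph{realizable}, i.e.\ that the descended map $G$ exists as a holomorphic map, and second that this $G$ then be an isometric embedding. I would show that realizability is equivalent to the single inclusion $v\subseteq h^{-1}(u)$, whereas \Cref{prop-isometric-covering} controls the isometry property once realizability is in hand. With this in place the claimed equality falls out of a clean set-theoretic decomposition: $h^{-1}(u)=v\cup R(h)$ is equivalent to the three conditions $v\subseteq h^{-1}(u)$ (realizability), $R(h)\subseteq h^{-1}(u)$, and $h^{-1}(u)\subseteq v\cup R(h)$; here the second is equivalent to $h(R(h))=b\subseteq u$ (the branch points are marked) by surjectivity of $h$, and the third is exactly the inclusion appearing in \Cref{prop-isometric-covering}.

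First I would establish the realizability criterion. Since $G$ is the descent of the totally marked construction $H$ along the two forgetful maps of \labelcref{eq-tm-covering}, the map $G$ is well defined precisely when the $v$-marking of the image surface depends only on the retained source data $(X,\phi(u))$. The underlying complex structure $X'=H(X)$ and the induced holomorphic branched covering $h_X\colon X'\to X$ depend on $X$ alone, so a marked point $\phi'(q')$ with $q'\in v$ is determined as soon as $h(q')\in u$, that is $q'\in h^{-1}(u)$; and holomorphicity of the descent is then automatic from that of $H$ and of the forgetful maps. Conversely, if $q'\in v$ has $h(q')\in z\setminus u$, then moving within a fibre of $\calT(\tops,z)\to\calT(\tops,u)$ lets the forgotten point $\phi(h(q'))$ travel along a nonconstant path on $X$, and by continuity of the preimage under $h_X$ (whether or not $q'$ is a ramification point) the point $\phi'(q')$ moves with it; hence the image in $\calT(\tops',v)$ is nonconstant on the fibre and $G$ cannot descend. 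Therefore $(h,z,u,v)$ is realizable if and only if $v\subseteq h^{-1}(u)$.

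For the forward implication, assume $G$ is realizable and an isometric embedding. Realizability gives $v\subseteq h^{-1}(u)$, while \Cref{prop-isometric-covering} (whose hypothesis $\calT(\tops,u)\not\simeq\Tgn[1,1]$ is exactly $(g(\tops),|u|)\neq(1,1)$) yields $b\subseteq u$ and $h^{-1}(u)\subseteq v\cup R(h)$. From $b\subseteq u$ we get $R(h)\subseteq h^{-1}(b)\subseteq h^{-1}(u)$, so $v\cup R(h)\subseteq h^{-1}(u)$, and combined with the reverse inclusion this gives the equality. For the converse, assume $h^{-1}(u)=v\cup R(h)$. Then $v\subseteq v\cup R(h)=h^{-1}(u)$, so by the criterion above the tuple is realizable and $G$ exists; moreover $R(h)\subseteq h^{-1}(u)$ forces $b=h(R(h))\subseteq u$, and $h^{-1}(u)\subseteq v\cup R(h)$ holds trivially. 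These are precisely the sufficient conditions of \Cref{prop-isometric-covering}, so the realizable $G$ is an isometric embedding. The standing hypotheses $g(\tops)\geq 1$ or $g(\tops')\geq 2$ together with $(g(\tops),|u|)\neq(1,1)$ serve to keep both \TM spaces non-exceptional, so that the simple-pole argument underlying \Cref{prop-isometric-covering} applies verbatim.

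The step I expect to require the most care is the realizability criterion, specifically the rigorous claim that the descent fails when $v\not\subseteq h^{-1}(u)$. One must identify the fibres of the forgetful map $\calT(\tops,z)\to\calT(\tops,u)$ with configurations of the forgotten points on a fixed Riemann surface, and then verify that as such a configuration varies the corresponding $v$-marked points on $X'$ trace a genuinely nonconstant path in $\calT(\tops',v)$ — noting that in \TM space, as opposed to the coarse moduli space, no mapping-class symmetry can absorb this motion, so a change of marking is an honest change of point. Establishing the continuity (and, near ramification, properness) of the preimage map $h_X^{-1}$ that makes this tracking argument airtight is the technical heart of the proof.
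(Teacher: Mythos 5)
Your set-theoretic decomposition of $h^{-1}(u)=v\cup R(h)$ into the three conditions, and the way you feed $b\subseteq u$ and $h^{-1}(u)\subseteq v\cup R(h)$ through \Cref{prop-isometric-covering}, agrees with the paper. The genuine gap is your central lemma, ``$(h,z,u,v)$ is realizable if and only if $v\subseteq h^{-1}(u)$'', which is false in both directions even under the standing hypotheses, and whose ``only if'' half carries your entire forward implication. For ``only if'': take $g(\tops)=g(\tops')=1$, $h$ an unramified covering of tori, $|u|\geq 2$ (so all hypotheses hold), $z=u\cup\{p\}$, and $v=\{q\}$ a single point with $h(q)=p\in z\setminus u$. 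The pulled-back torus $X'$ depends only on $X$, and as the forgotten point $\phi(p)$ moves, its preimage $\phi'(q)$ moves by translations of $X'$; since translations are isotopic to the identity and the point-pushing subgroup of $\Mod(\tops',q)\cong\Mod_{1,1}$ is trivial, the composite $\calT(\tops,z)\to\calT(\tops',v)\simeq\Tgn[1,1]$ is constant on the fibers of $\calT(\tops,z)\to\calT(\tops,u)$, so $G$ descends even though $v\not\subseteq h^{-1}(u)$. (This $G$ drops dimension, hence is not an isometric embedding, so the proposition survives --- but your lemma does not.) Your nonconstancy argument tacitly assumes the motion of $\phi'(q)$ cannot be absorbed by automorphisms of $X'$, which fails exactly when $X'$ has continuous automorphisms. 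The ``if'' half also fails as stated: your premise that $X'=H(X)$ and $h_X$ depend on $X$ alone is wrong when $z\setminus u$ contains branch points of $h$, since moving a forgotten branch point changes the complex structure of the branched cover; your converse application happens to be salvageable because there $R(h)\subseteq h^{-1}(u)$ gives $b\subseteq u$, but you invoke the criterion before extracting that inclusion.

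The structural consequence is that your forward direction never uses that $G$ is an isometric embedding to obtain $h(v)\subseteq u$, whereas this is precisely the step the paper proves analytically: the coderivative of an isometric $G$ is the trace map $\tr_{h_X}:Q(X',v)\to Q(X,u)$, and a quadratic differential with a simple pole at a point $p'\in\phi'(v)$ and no other poles in the fiber $h_X^{-1}(h_X(p'))$ traces to a differential with a simple pole at $h_X(p')$, forcing $h_X(p')$ to be a marked point; the existence of such a differential is supplied by Riemann--Roch when $g(X')\geq 2$, and by a separate analysis of the unramified torus case (using $|u|\geq 2$) when $g=g'=1$. This is exactly where the hypothesis ``$g(\tops)\geq 1$ or $g(\tops')\geq 2$'' is consumed; in your write-up that hypothesis is never actually used, which is the tell-tale sign of the gap. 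To repair your proof, replace the ``only if'' half of your criterion with this trace-map/simple-pole argument, and state the ``if'' half only under the assumption $b\subseteq u$.
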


\begin{proof}
First assume that the branching tuple  $(h,z,u,v)$
satisfies
\begin{equation}\label{eq-branching}
h^{-1}(u) = v\cup R(h),
\end{equation}
where $B(h)$ and $R(h)$ are the branching and ramification points of $h$, respectively. In particular $u$ contains all the branch points.
Then $(h,z,u,v)$ is realizable by a covering construction $G$, since it can be obtained from the totally marked covering construction with branching tuple $(h,u,u,h^{-1}(u))$ by forgetting the points in $h^{-1}(u) - v.$
We can then apply \Cref{prop-isometric-covering} to conclude $G$ is an isometric embedding.

It remains to prove the converse and assume that $G$ is an isometric covering construction for some branching tuple $(hz,u,v)$.
By \Cref{prop-isometric-covering} 
it follows 
\[
B(h)\subseteq u,\, h^{-1}(u)\subseteq v\cup R(h).
\]

It thus remains to argue that
$h(v)\subseteq u$, i.e., every marked point of $X'$ lies over a marked point of $X$.
The coderivative map of $G$ can be computed as the trace map \[\tr_{h_X}:Q(X',v)\to Q(X,u)\] by looking at the diagram \cref{eq-covering}. 

If there exists a quadratic differential $q'\in Q(X',v)$ with a simple pole at a marked point $p\in X'$ but no other poles in the fiber $h_X^{-1}(h(p))$, then $\tr_X(q')$ has a simple pole at $h(p)$ and thus $h(p)$ needs to be marked as well.
If $g(X')\geq 2$ there exists a quadratic differential with a simple pole at $p$ and no other poles by Riemann-Roch.

If $g(X') =1$ and $g(X)=1$, then $h_X$ is unramified by Riemann-Hurwitz. Since $u$ has at least two points the same is true for $h^{-1}(u)$. Furthermore $h^{-1}(u)$ is disjoint from $p$, and thus there exists a quadratic differential with a simple pole at $p$ such that the trace has a simple pole at $h(p)$.

\end{proof}

\clearpage


\begin{thebibliography}{EMMW20}



\bibitem[Ant14]{antonakoudis-thesis}
S. M. Antonakoudis. \newblock {\em The complex geometry of {\TM} space}. \newblock PhD thesis

\bibitem[Ant17]{antonakoudis-isometric} S. M. Antonakoudis. \newblock {\em Isometric disks are holomorphic}. \newblock Invent. math. 207, 1289–1299 (2017).


\bibitem[Ear77]{earle}
C. J. Earle. \newblock {\em The \TM distance is differentiable.}\newblock Duke Math. J. vol. 44(2), pp.389-397, 1977.


\bibitem[EK74]{earle-kra}
C. Earle and I. Kra. \newblock {\em On isometries between {T}eichm{\"u}ller space}. \newblock 
{Duke Mathematical Journal}, vol. 41 (3), 1974


\bibitem[EKK94]{earle-kra-krushkal}
C. J. Earle, I. Kra  and S. L. Krushkal. \newblock {\em Holomorphic motions and {\TM} spaces}. \newblock
Transactions of the
American mathematical society Volume 343, Number 2, 1994


\bibitem[EMMW20]{emmw}
A. Eskin, C. McMullen, R. Mukamel, A.Wright.  \newblock {\em Billiards, quadrilaterals and moduli spaces}. \newblock

J. Amer. Math. Soc. 33, pp. 1039-1086, 2020.

\bibitem[Fil16]{filip-algebraicity}
S. Filip, \newblock {\em Splitting mixed Hodge structures over affine invariant manifolds}. \newblock Ann. of Math. vol. 183 (2), pp. 681–713, 2016


\bibitem[GG22]{isometric-submersions} 
D. Gekhtman and  M. Greenfield. \newblock {\em Isometric submersions of {\TM} spaces are forgetful}. \newblock 
 Israel Journal of Mathematics, vol 247 (2), pp. 499-517, 2022.

\bibitem[Gar87]{gardiner}
F. P. Gardiner. \newblock  {\em {\TM} Theory and Quadratic Differentials}.   \newblock New York: John Wiley \& Sons, New York, 1987.


\bibitem[Hub16]{hubbard}
J. Hubbard. \newblock {\em Teichm{\"u}ller Theory and Applications to Geometry, Topology, and Dynamics}. \newblock 
Matrix Editions, in Surface Homeomorphisms and Rational Functions vol. 2, 2016 

\bibitem[ALS08]{leininger-souto} J. Aramayona, C. Leininger, J. Souto.
\newblock {\em Injections of mapping class groups}. \newblock Geometry \& Topology (13), 2008, pp. 2523-2541.


\bibitem[Mar03]{markovic}
V. Markovic. \newblock {\em 
Biholomorphic maps between  {\TM} spaces}. \newblock  Duke Math. J. 120, no. 2, 405—431, 2003.

\bibitem[Mas75]{masur-geodesics}
H.Masur. \newblock {\em  On a Class of Geodesics in Teichm{\"u}ller space}. \newblock Annals of Math (102), 1975, pp. 205-221

\bibitem[MW95]{masur-wolf}
H. Masur, M. Wolf. \newblock {\em Teichm{\"u}ller space is not {G}romov hyperbolic}. \newblock Annales Academiae Scientiarum Fennicae (20), 1995, pp. 259–267

\bibitem[MMW17]{mcmullen-mukamel-wright}
C. McMullen, R. Mukamel, A. Wright. \newblock {\em Cubic curves and totally geodesic subvarieties of moduli space}. \newblock
Annals of Mathematics, Volume 185(3), pp. 757-990, (2017)



\bibitem[Ree04]{rees}
M. Rees. \newblock {\em \TM Distance is not $C^{2+\epsilon}$}. \newblock  
Proceedings of the London Mathematical Society, Volume 88, Issue 1, pp. 114–134, 2004.

\bibitem[Roy71]{royden-automorphisms}
H. L. Royden. \newblock {\em Automorphisms and isometries of Teichmüller space} \newblock in Advances in the Theory of Riemann Surfaces, Princeton, N.J.: Princeton Univ. Press, 1971, vol. 66, pp. 369-383.



\bibitem[Wri20]{wright-totally-geodesic}
A. Wright. \newblock {\em Totally geodesic submanifolds of \TM space}. \newblock Journal of Differential Geometry 115.3 , pp. 565-575, 2020.



\end{thebibliography}
\end{document}